\newtheorem{theorem}{Theorem}[section]
\newtheorem{corollary}[theorem]{Corollary}
\newtheorem{lemma}[theorem]{Lemma}
\newtheorem{proposition}[theorem]{Proposition}
\theoremstyle{definition}
\newtheorem{definition}[theorem]{Definition}
\newtheorem{remark}[theorem]{Remark}
\newtheorem{claim}[theorem]{Claim}
\newtheorem{example}[theorem]{Example}
\newtheorem{problem}[]{Problem}
\title{Enumeration and Extensions of Word-representants}
 \author{Marisa Gaetz$^1$}
 \address{$^1$Massachusetts Institute of Technology}
\email{mgaetz@mit.edu}
 \author{Caleb Ji$^2$} 
 \address{$^2$Columbia University}
\email{caleb.ji@columbia.edu}
\begin{document}

\maketitle              

\begin{abstract}
Given a finite word $w$ over a finite alphabet $V$, consider the graph with vertex set $V$ and with an edge between two elements of $V$ if and only if the two elements alternate in the word $w$. Such a graph is said to be \textit{word-representable} or \textit{$11$-representable} by the word $w$; this latter terminology arises from the phenomenon that the condition of two elements $x$ and $y$ alternating in a word $w$ is the same as the condition of the subword of $w$ induced by $x$ and $y$ avoiding the pattern 11. In this paper, we first study minimal length words that word-represent graphs, giving an explicit formula for both the length and the number of such words in the case of trees and cycles. We then extend the notion of word-representability (or $11$-representability) of graphs to $t$-representability of graphs, for any pattern $t$ consisting of two distinct letters. We prove that every graph is $t$-representable for several classes of patterns $t$. Finally, we pose a few open problems. 

\end{abstract}

\section{Introduction} 
The theory of word-representable graphs provides an important way to associate graphs with words. Motivated by the study of Perkins semigroups by Kitaev and Seif in \cite{K0}, this topic has been the subject of much research since its inception in 2008. A major theme of this research has been the classification of word-representable graphs (see, for instance, \cite{K2}, \cite{K5}, and \cite{K6}). Other papers, such as \cite{K3} and \cite{K4}, have studied variants and extensions of the original notion of word-representability. 

In this paper, we begin by giving a brief review of the basic definitions and results in this field.  For a more thorough treatment, we refer the reader to \cite{K1}. In what follows, all graphs are taken to be simple and undirected.  


\begin{definition}[{\cite[Definition 1]{K1}}] \label{def:word-rep}
A graph $G = (V,E)$ is \textit{word-representable} if there exists a word $w$ over the alphabet $V$ such that for any $x,y \in V$, $x\neq y$, $xy \in E$ if and only if $x$ and $y$ alternate in $w$; that is, if and only if there are no two instances of $x$ or $y$ without an occurrence of the other letter in between.  We require that $w$ contains each letter of $V$ at least once. For such a graph, we say that $w$ \textit{word-represents} $G$ or that $w$ is a \textit{word-representant} for $G$.
\end{definition} 

It is important to note that, following Kitaev, a word-representant $w$ for a graph $G = (V,E)$ must contain each letter of $V$ at least once. As a result of this requirement, we do not need to worry about whether it makes sense for letters $x$ and $y$ to ``alternate" in $w$ when one (or both) of $x$ and $y$ does not appear in $w$. 

 A word $w$ is called \textit{$k$-uniform} if each letter in $w$ occurs exactly $k$ times. For example, 12332414 is a 2-uniform word over $V = \{ 1,2,3,4 \}$, while the 1-uniform words are precisely the permutations of $V$. 

\begin{definition}[{\cite[Definition 2]{K1}}] \label{def:uniform-rep}
A graph $G$ is \textit{$k$-word-representable} if it has a $k$-uniform representant $w$. 
\end{definition}

In fact, it has been shown that Definitions \ref{def:word-rep} and \ref{def:uniform-rep} are equivalent.

\begin{theorem}[\cite{K2}] \label{thm:equiv-defns}
A graph is word-representable if and only if it is $k$-word-representable for some $k$.
\end{theorem}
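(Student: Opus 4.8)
The plan is to prove the two directions separately, with essentially all the content living in the forward implication. The backward direction is immediate: a $k$-uniform representant uses each letter exactly $k \geq 1$ times, so in particular every letter of $V$ appears, and it is therefore a word-representant in the sense of Definition \ref{def:word-rep}. Hence a $k$-word-representable graph is word-representable directly from Definitions \ref{def:word-rep} and \ref{def:uniform-rep}.

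For the forward direction, suppose $w$ represents $G = (V,E)$, let $m_a$ denote the number of occurrences of $a \in V$ in $w$, and set $k = \max_{a} m_a$. I would prove, by induction on the \emph{total deficiency} $\Delta(w) = \sum_{a \in V}(k - m_a)$, that $G$ has a $k$-uniform representant. When $\Delta(w) = 0$ the word $w$ is already $k$-uniform and we are done, so assume some letter occurs fewer than $k$ times. The heart of the argument is to locate a single deficient letter whose multiplicity can be increased by one by prepending a copy of it to $w$ \emph{without changing the represented graph}; this lowers $\Delta$ by one while keeping the maximum multiplicity equal to $k$, and the induction then closes.

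To find such a letter, order the vertices by the position of their first occurrence in $w$, and let $v$ be the deficient letter ($m_v < k$) whose first occurrence is latest in this order. The key claim is that every neighbor of $v$ first occurs before $v$ does. For a deficient neighbor this is immediate from the choice of $v$. For a neighbor $u$ with $m_u = k > m_v$, I would invoke the elementary counting lemma that in an alternating word on two letters the more frequent letter occurs first (an alternating binary word with unequal letter counts has the form $u v u v \cdots v u$); since $u$ and $v$ alternate in $w$ and $m_u > m_v$, the letter $u$ must first occur before $v$. Thus all neighbors of $v$ first occur before $v$, as claimed.

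Finally I would verify that the word $vw$ obtained by prepending one copy of $v$ still represents $G$, by checking the alternation condition pairwise. For a neighbor $u$ of $v$, the $\{u,v\}$-subword of $w$ begins with $u$ (by the claim) and is alternating, so prefixing a $v$ produces $v u v u \cdots$, which is still alternating; for a non-neighbor $u$, the $\{u,v\}$-subword of $w$ is non-alternating, i.e., it contains two equal letters with no occurrence of the other between them, and prefixing a single letter at the front cannot destroy such a witness, so it remains non-alternating. Therefore $vw$ represents $G$, still has maximum multiplicity $k$, and satisfies $\Delta(vw) = \Delta(w) - 1$. The main obstacle is isolating the right letter to lengthen: the counting lemma is exactly what guarantees that the latest-appearing deficient letter has no neighbor appearing even later, which is precisely what makes a front insertion harmless, while the pairwise verification afterward is routine.
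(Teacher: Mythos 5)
Your argument is correct, and it fills a genuine gap in the paper's exposition: the paper does not prove this statement at all, but simply imports it as Theorem \ref{thm:equiv-defns} from Kitaev and Pyatkin \cite{K2}, so there is no internal proof to compare against. Measured against the standard argument in the literature (\cite{K2}; see also \cite{K1}), your route is a close variant rather than a fundamentally different one. The classical proof fixes $k$ as the maximum multiplicity and, in one step, prepends to $w$ the initial permutation of $w$ restricted to \emph{all} deficient letters, iterating until the word is $k$-uniform; you instead prepend a single deficient letter at a time, chosen to have the latest first occurrence among the deficient letters. Both arguments rest on exactly the same two observations: prepended letters that respect the first-occurrence order of $w$ cannot break alternation for edges (your counting lemma, that in an alternating two-letter word with unequal multiplicities the more frequent letter occurs first and last, is precisely what makes this work for a deficient letter adjacent to a letter of full multiplicity $k$), and prepending can never destroy a witness of non-alternation, so non-edges remain non-edges. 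Your one-letter-at-a-time scheme costs more iterations but makes each verification essentially trivial, and the induction on total deficiency $\Delta(w)$ is clean since the maximum multiplicity $k$ is preserved at every step. One small point worth stating explicitly in a final write-up: pairs of letters neither of which is $v$ have their restriction unchanged by the prepending, which is the (trivial) remaining case of the pairwise check.
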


For a word $w$, we call the permutation obtained by removing all but the leftmost occurrence of each letter in $w$ the \textit{initial permutation} of $w$. Let $\pi(w)$ denote the initial permutation of $w$. Similarly, we call the permutation obtained by removing all but the rightmost occurrence of each letter in $w$ the \textit{final permutation} of $w$. Let $\sigma(w)$ denote the final permutation of $w$.  In addition to the initial and final permutations of a word, it will often be useful to consider the restriction of a word to some subset of the vertices it is defined on. To denote a word $w$ restricted so some letters $x_1, \ldots, x_m$, we write $w \vert_{x_1 \cdots x_m}$. For example, if $w=132435213$, then $w\vert_{12} = 1221$.


In \cite{K5}, Halld\'orsson, Kitaev, and Pyatkin study the minimal $k$ for which certain graphs are $k$-word-representable. In Section \ref{sec:minimal length}, we study the absolute minimal length word-representants of trees and cycles, where we do not require our word-representants to be $k$-uniform for any $k$. We calculate the lengths of these absolute minimal length word-representants and give a formula for the number of such representants. In Section \ref{sec-patterns}, we introduce a natural alternative notion of representability for graphs, motivated by the alternative notion of graph representability given by Kitaev in \cite{K4}. Finally, we present some open problems in Section \ref{sec:future work}.


\section{Enumeration of minimal length representants} \label{sec:minimal length}
By Theorem \ref{thm:equiv-defns}, the following notion introduced in \cite{K1} by Kitaev is well-defined.

\begin{definition}[{\cite[Definition 3]{K1}}]
Let $G$ be a word-representable graph. The \textit{representation number} of $G$ is the least $k$ such that $G$ is $k$-word-representable.
\end{definition}

In \cite{K1}, Kitaev goes on to study the class of graphs with representation number two, and in \cite{K5}, Halld\'orsson, Kitaev, and Pyatkin study the minimal $k$ for which certain graphs are $k$-word-representable. Rather than following these authors in investigating minimal length \textit{uniform} word-representants of graphs, we here choose to study \textit{absolute} minimal length word-representants. In other words, we investigate the minimal length word-representants of graphs, without the requirement that the representants be uniform. 

\begin{definition}
For a word-representable graph $G$, define $\ell(G)$ as the minimal length of a word $w$ that word-represents $G$. 
\end{definition}

\begin{definition}
For a word-representable graph $G$, define $n(G)$ as the number of words of length $\ell(G)$ that word-represent $G$. 
\end{definition}

We obtain the following bound for $\ell(G)$ of a general graph $G$.
\begin{theorem}
\label{thm:min-length-graph}
Let $G = (V,E)$ be a word-representable graph with connected components $\{ G_i = (V_i, E_i) \}_{i=1}^{k}$. Then 
$$ \ell(G) \le \sum_{i=1}^k (\ell(G_i) + |V_i|) - \max_{1\le j\le k}{|V_j|}.$$
\end{theorem}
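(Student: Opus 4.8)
The plan is to exhibit a single word $W$ that word-represents $G$ and has length exactly $\sum_{i=1}^k(\ell(G_i)+|V_i|) - \max_j|V_j|$; since $\ell(G)$ is the minimum over all representants, producing one such word immediately yields the claimed upper bound. For each $i$, fix a minimal-length representant $w_i$ of $G_i$, so $|w_i| = \ell(G_i)$. After relabeling, assume the last component $G_k$ is one of maximal vertex count, $|V_k| = \max_j |V_j|$. I would then set
\[
W \;=\; \pi(w_1)\,w_1 \;\; \pi(w_2)\,w_2 \;\cdots\; \pi(w_{k-1})\,w_{k-1} \;\; w_k,
\]
that is, every non-maximal component's representant is prepended with its own initial permutation, while the maximal component is left unpadded and placed last. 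A direct count gives $|W| = \sum_{i<k}(|V_i|+\ell(G_i)) + \ell(G_k)$, which is the target length.

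It remains to verify that $W$ is a valid representant, and this splits into a within-component claim and a cross-component claim. For the within-component part, the key lemma is that for any word $u$, prepending its initial permutation does not change the graph it represents: restrict $\pi(u)\,u$ to a pair $x,y$. If $x,y$ alternate in $u$, say with $x$ occurring first, then $\pi(u)\vert_{xy}=xy$ while $u\vert_{xy}$ begins with $x$, so $(\pi(u)\,u)\vert_{xy}=xy\,xyx\cdots$ is still alternating; if $x,y$ do not alternate in $u$, then $u\vert_{xy}$ already contains a repeated letter, and prefixing two distinct letters cannot destroy it. Applying this to each $w_i$ shows that within any single $V_i$ the adjacency relation read off from $W$ agrees with $E_i$, and for the unpadded block $w_k$ there is nothing to check.

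For the cross-component part, observe that every letter of a non-maximal component appears at least twice in $W$ — once in $\pi(w_i)$ and at least once in $w_i$ — and that all of its occurrences lie inside the single contiguous block $\pi(w_i)\,w_i$. Hence if $x\in V_a$ and $y\in V_b$ with $a<b$, all occurrences of $x$ precede all occurrences of $y$, so $W\vert_{xy} = x^{m}y^{n}$ with $m\ge 2$; this contains $xx$ and is therefore non-alternating, matching the absence of edges between distinct components. The only asymmetry is that $V_k$ receives no padding, but this is harmless: in any cross pair involving $V_k$ the letter from the non-maximal block still occurs at least twice and still precedes the $V_k$-letter, so non-alternation persists.

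I expect the main subtlety to be the within-component claim rather than the cross-component one: one must pad each non-maximal block enough to force the repetition that kills cross-component alternation, yet the padding must not corrupt the delicate alternation pattern inside the block. Prepending precisely the initial permutation $\pi(w_i)$ is what reconciles these two requirements, and ordering the maximal component last (leaving it unpadded) is what produces the $-\max_j|V_j|$ saving. A sanity check against the empty graph on $k$ vertices, where the bound reads $2k-1$ and is achieved by $x_1x_1\cdots x_{k-1}x_{k-1}x_k$, confirms the bookkeeping.
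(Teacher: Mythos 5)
Your proposal is correct and is essentially the paper's own construction in mirror image: the paper pads each non-maximal block by appending its final permutation, forming $w_1\sigma(w_1)\,w_2\sigma(w_2)\cdots w_{k-1}\sigma(w_{k-1})\,w_k$, while you prepend the initial permutation $\pi(w_i)$ instead; the length count, the within-block invariance lemma, and the cross-component non-alternation argument are the same. No gaps.
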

\begin{proof}
For each $i \in \{ 1,2,\ldots, k \}$, let $w_i$ be a minimal length word-representant of $G_i$. Without loss of generality, let $V_k$ be a maximal component.  We claim that the word $w = w_1 \sigma (w_1) \; w_2 \sigma(w_2) \cdots w_{k-1} \sigma(w_{k-1}) \; w_k$ represents $G$ and has length $\sum_{i=1}^k (\ell(G_i) + |V_i|) - |V_k|$. For each $i \in \{ 1,2,\ldots, k \}$, we have that $w_i$ represents $G_i$ over the alphabet $V_i$ and that $w_i \sigma(w_i)$ represents $G_i$ over the alphabet $V_i$ (since appending $\sigma(w_i)$ to $w_i$ does not affect which letters alternate). Noting that the sets $V_1, V_2, \ldots, V_k$ are pairwise disjoint, we have that $w \vert_{V_i}$ represents $G_i$ for all $1 \leq i \leq k$. Furthermore, for all pairs $(i,j)$ satisfying $1 \leq i < j \leq k$, we have that every vertex in $V_i$ occurs at least twice in $w$ before any vertex in $V_j$ appears. Therefore, $w$ accurately encodes each connected component of $G$ as well as the information that there are no edges between different connected components of $G$. It follows that $w$ word-represents $G$, as desired. Finally, it is clear by construction that $w$ has length $\sum_{i=1}^k (\ell(G_i) + |V_i|) - |V_k|$.  
\end{proof}

\subsection{Trees}
In the case of trees, we find a precise value for $\ell(G)$.  First we prove a simple bound for all triangle-free graphs. 

\begin{lemma}
\label{lem:triangle-free}
Let $G$ be a triangle-free graph with $n$ vertices.  Then $\ell(G)\ge 2n-2$.
\end{lemma}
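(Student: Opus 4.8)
The plan is to bound the length of any word-representant $w$ of $G$ from below by controlling how many letters of $V$ may appear only once in $w$. Writing $|w|$ for the length of $w$, note that $|w|$ equals the total number of letter-occurrences, so if $s$ letters appear exactly once and the remaining $n-s$ letters each appear at least twice, then $|w| \ge s + 2(n-s) = 2n - s$. Thus it suffices to prove that at most two letters of $V$ can appear exactly once in $w$, which would give $|w| \ge 2n - 2$ and hence $\ell(G) \ge 2n-2$.

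The key step, which I view as the heart of the argument, is the observation that the set of letters appearing exactly once in $w$ forms a clique in $G$. Indeed, suppose $x$ and $y$ are two distinct letters each occurring exactly once in $w$. Then the restricted word $w|_{xy}$ consists of a single $x$ and a single $y$, so it equals either $xy$ or $yx$. In either case there are no two occurrences of $x$ (or of $y$) without the other letter in between, so $x$ and $y$ alternate in $w$; by Definition \ref{def:word-rep} this means $xy \in E$. Since $x$ and $y$ were arbitrary among the once-occurring letters, all such letters are pairwise adjacent.

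To finish, I would invoke the hypothesis that $G$ is triangle-free: a triangle-free graph contains no clique on three vertices, so the clique of once-occurring letters has at most two elements. This gives $s \le 2$, and combining with the count above yields $\ell(G) = |w| \ge 2n - 2$ for every word-representant $w$, as claimed. The argument is short, and essentially all of its content sits in the clique observation of the second paragraph; the triangle-free hypothesis enters only at the very end to cap the clique size at two, and the length bound is then pure bookkeeping. The main subtlety to be careful about is simply ensuring the restriction-to-two-letters characterization of adjacency is applied correctly, which is immediate from the definition of word-representability.
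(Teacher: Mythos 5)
Your proof is correct and follows essentially the same approach as the paper: both arguments observe that any two letters occurring exactly once in a word-representant must alternate and hence be adjacent, so triangle-freeness caps the number of once-occurring letters at two, giving the bound $\ell(G) \ge 2(n-2) + 2 = 2n-2$. The only cosmetic difference is that you phrase the key step as a clique observation proved directly, while the paper runs the same argument as a contradiction with three once-occurring letters.
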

\begin{proof}
Let $V$ be the vertex set of $G$ and let $w$ be a minimal length word-representant of $G$. By definition of word-representant (see Definition \ref{def:word-rep}), we have that all elements of $V$ occur at least once in $w$. We claim that there are at most two elements of $V$ that occur only once in $w$. Suppose, for the sake of contradiction, that $x,y,z \in V$ each occur exactly once in $w$. Then any pair of letters chosen from $\{ x,y,z \}$ must alternate in $w$. Consequently, there are edges $xy,yz,xz \in E$ forming a triangle in $G$, contradicting the fact that $G$ is triangle-free. Therefore, at most two elements of $V$ occur only once in $w$, so $\ell(G) \geq 2(n-2) +2 = 2n-2$. 
\end{proof}

\begin{theorem}
\label{thm:min-length-tree}
Let $T = (V,E)$ be a tree with $n := |V| \geq 2$ vertices. Then $\ell (T)=2n-2$.
\end{theorem}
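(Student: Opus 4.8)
The lower bound is immediate: a tree is acyclic and hence triangle-free, so Lemma~\ref{lem:triangle-free} gives $\ell(T) \ge 2n-2$. The entire content of the theorem is therefore the matching upper bound, for which the plan is to exhibit, for every tree on $n$ vertices, a word-representant of length exactly $2n-2$. I would prove this by induction on $n$, building the tree up one leaf at a time.

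The base case $n=2$ is the single edge $\{a,b\}$, represented by the length-$2$ word $ab$. For the inductive step, I would pick a leaf $v$ of $T$ with unique neighbor $u$ and set $T' = T - v$, a tree on $n-1 \ge 2$ vertices. By the inductive hypothesis there is a word $w'$ of length $2(n-1)-2 = 2n-4$ representing $T'$ over the alphabet $V \setminus \{v\}$. I would then form $w$ from $w'$ by inserting two copies of $v$ immediately before and immediately after a single chosen occurrence of $u$. This adds exactly $2$ letters, so $w$ has length $2n-2$, as required.

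It remains to check that $w$ represents $T$, and here the crucial observation is that whether two letters $x,y$ alternate depends only on the restricted subword $w\vert_{xy}$; inserting a brand-new letter $v$ therefore leaves every alternation relation among the letters of $V\setminus\{v\}$ unchanged. (This is the same principle already used in the proof of Theorem~\ref{thm:min-length-graph}.) Consequently $w\vert_{V\setminus\{v\}} = w'$ still encodes $T'$ correctly, so all edges and non-edges of $T$ not involving $v$ are represented as desired. For the pairs involving $v$, note that the two inserted copies of $v$ tightly surround a single occurrence of $u$ and nothing else, so exactly that one occurrence of $u$ lies between them; hence $v$ alternates with $u$ (giving the edge $uv$) but with no other vertex, since no other letter has an occurrence between the two copies of $v$. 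This exactly matches the fact that $v$ is a leaf whose only neighbor is $u$.

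The one point to get right is precisely this placement step, together with the check that it behaves uniformly regardless of how often $u$ occurs in $w'$: if $u$ occurs twice, surrounding one of its two occurrences puts exactly one endpoint of $u$ between the copies of $v$, yielding the alternating pattern $vuvu$ (or $uvuv$); if $u$ occurs only once, surrounding it yields $vuv$. In every case $v$ alternates with $u$ and with nothing else. I expect no serious obstacle beyond this bookkeeping; the induction then closes and yields $\ell(T) \le 2n-2$, which together with Lemma~\ref{lem:triangle-free} gives $\ell(T) = 2n-2$.
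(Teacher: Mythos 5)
Your approach is the same as the paper's: the same induction on $n$, the same base case, and the same construction (your ``insert $v$ immediately before and immediately after a chosen occurrence of $u$'' is precisely the paper's ``replace one instance of $b$ by $aba$''). However, there is a genuine gap in your verification that $v$ alternates with $u$. Your assertion that the placement ``behaves uniformly regardless of how often $u$ occurs in $w'$'' is false, and your case analysis (one or two occurrences of $u$) is never justified as exhaustive. If $u$ occurs $m$ times in $w'$ and you surround the $i$-th occurrence, then $w\vert_{uv} = u^{i-1}\,vuv\,u^{m-i}$, which alternates only when $i-1 \le 1$ and $m-i \le 1$. So if $u$ occurred three times and you surrounded the first or last occurrence, you would get $vuvuu$ or $uuvuv$, which do not alternate; if $u$ occurred four or more times, no choice of occurrence would work at all. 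In other words, ``exactly one occurrence of $u$ lies between the two copies of $v$'' is not sufficient for alternation; you also need at most one occurrence of $u$ on each side.

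The missing step is exactly the one the paper supplies at this point: since $w'$ represents the triangle-free graph $T'$ and has length exactly $2(n-1)-2$, at most two of its letters can occur exactly once (three letters each occurring once would pairwise alternate and force a triangle in $T'$), and then counting letters shows that every letter of $w'$ occurs at most twice --- in particular $u$ does. With that observation inserted, your two cases really are exhaustive and the induction closes. The fix is a sentence of counting, reusing the argument inside Lemma~\ref{lem:triangle-free}, but as written the inductive step does not go through.
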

\begin{proof}
By Lemma~\ref{lem:triangle-free}, $\ell(T)\ge 2n-2$.  We now show by induction on $n$ that there is a word of length $2n-2$ that represents $T$. Observe that the word $12$ represents the unique tree on two vertices (namely, the tree $T_2=(V_2,E_2)$ defined by $V_2 = \{ 1,2 \}$ and $E_2 = \{ 12 \}$). In other words, the result holds for $n=2$. Assume that the result holds up to $n=k-1$, where $k \geq 3$. Let $T_{k} = (V_k,E_k)$ be any tree on $k$ vertices. Let $a$ denote a leaf of $T_{k}$, and let $b$ denote the vertex adjacent to $a$. By the inductive hypothesis, there is a word $w'$ of length $2k-4$ that represents the tree $T_k \setminus \{ a \}$ obtained from $T_k$ by removing the vertex $a$ and the edge $ab$. Now, replace one instance of $b$ in $w'$ with $aba$, and let $w$ denote the resulting word. 

We claim that $w$ word-represents $T_k$. Recall from the above argument that at most two elements of $V_k \setminus \{ a \}$ occur only once in $w'$. Since $w'$ has length $2k-4$, all of the other $k-3$ elements of $V_k \setminus \{ a \}$ must occur exactly twice in $w'$. In particular, there are at most two instances of $b$ in $w'$ (and hence in $w$). It follows that $a$ and $b$ alternate in $w$. Furthermore, $a$ clearly does not alternate with any other letters in $w$. Consequently, $w$ is a word-representant of $T$ with length $2k-2$. The theorem follows by induction. 
\end{proof}

\begin{figure} 
\centering
\begin{minipage}{.5\textwidth}
  \centering
	\begin{tikzpicture}[scale=0.10]
\tikzstyle{every node}+=[inner sep=0pt]
\draw [black] (35.3,-13.2) circle (3);
\draw (35.3,-13.2) node {$1$};
\draw [black] (28.5,-23.8) circle (3);
\draw (28.5,-23.8) node {$2$};
\draw [black] (42,-23.8) circle (3);
\draw (42,-23.8) node {$3$};
\draw [black] (42,-34.5) circle (3);
\draw (42,-34.5) node {$4$};
\draw [black] (33.68,-15.73) -- (30.12,-21.27);
\draw [black] (36.9,-15.74) -- (40.4,-21.26);
\draw [black] (42,-26.8) -- (42,-31.5);
\end{tikzpicture}
\caption{A tree on four vertices.}
\label{fig:tree-four-vertices}
\end{minipage}%
\begin{minipage}{.5\textwidth}
  \centering
   \begin{tikzpicture}[scale=0.096]
\tikzstyle{every node}+=[inner sep=0pt]
\draw [black] (37.7,-25.8) circle (3);
\draw (37.7,-25.8) node {$1$};
\draw [black] (37.7,-11.3) circle (3);
\draw (37.7,-11.3) node {$2$};
\draw [black] (51.7,-34) circle (3);
\draw (51.7,-34) node {$3$};
\draw [black] (24.5,-34) circle (3);
\draw (24.5,-34) node {$4$};
\draw [black] (37.7,-22.8) -- (37.7,-14.3);
\draw [black] (40.29,-27.32) -- (49.11,-32.48);
\draw [black] (35.15,-27.38) -- (27.05,-32.42);
\end{tikzpicture}
 \caption{The star graph $S_3$.}
    \label{fig:star-3}
\end{minipage}
\end{figure}

\begin{example} 
\label{ex:tree-min-rep}
Figure \ref{fig:tree-four-vertices} depicts a tree on four vertices. According to Theorem \ref{thm:min-length-tree}, a minimal length representant for this tree has length $2 \cdot 4 - 2 = 6$. It is straightforward to check that 212434 is a minimal length representant of the depicted tree. 
\end{example}

For a tree $T = (V,E)$, we would now like to count the number $n(T)$ of minimal length word-representants for $T$. By Theorem \ref{thm:min-length-tree}, such representants have length $2|V|-2$ and are such that two vertices occur once and all other vertices occur twice. Note that the two vertices appearing once alternate and thus are connected in the graph.  Although this gives us some idea as to the structure of minimal length word-representants of $T$, we would like a more detailed picture. To this end, we establish the following notation and lemma.

For a tree $T = (V,E)$ and an edge $xy \in E$, let $T_{x, xy}$ be the subtree of $T$ obtained by deleting the edge $xy$ and taking the connected component containing $x$. Define $T_{y,xy}$ similarly.

\begin{lemma} \label{lem:subtree-order}
Let $T$ be a tree, and let $w$ be a minimal length representant for $T$. Let $x,y \in V(T)$ be the vertices of $T$ that occur only once in $w$; without loss of generality, let $x$ occur before $y$ in $w$. Then $w$ is of the form $w = w_{x_1} w_{x_2} \cdots w_{x_m} \; w_{y_1} w_{y_2} \cdots w_{y_n}$, where $w_{x_1}, w_{x_2}, \ldots, w_{x_m} \in T_{x,xy}$ and $w_{y_1}, w_{y_2}, \ldots, w_{y_n} \in T_{y,xy}$. 
\end{lemma}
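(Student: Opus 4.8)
The plan is to first record the combinatorial meaning of ``alternating'' in terms of positions, then prove two one-sided separation statements, and finally upgrade these to the clean split asserted by the lemma.

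First I would fix notation and basic facts. By Theorem~\ref{thm:min-length-tree}, $w$ has length $2|V(T)|-2$, and by Lemma~\ref{lem:triangle-free} together with this length being exactly $2|V(T)|-2$, precisely two letters occur once while every other letter occurs exactly twice. These two once-letters are $x$ and $y$, and since any two once-letters automatically alternate they span an edge, so $xy\in E(T)$ and the decomposition $V(T)=V(T_{x,xy})\sqcup V(T_{y,xy})$ is the partition induced by deleting $xy$. Write $p$ and $q$ for the unique positions of $x$ and $y$ in $w$, so $p<q$ by hypothesis. I would then record the elementary position dictionary: a once-letter $a$ (at position $s$) and a twice-letter $b$ (at positions $s_1<s_2$) alternate iff $s_1<s<s_2$; and two twice-letters alternate iff their occurrences strictly interleave. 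Since the only cross-edge of $T$ is $xy$, for every $u\in V(T_{x,xy})\setminus\{x\}$ and $v\in V(T_{y,xy})\setminus\{y\}$ the pairs $\{u,y\}$, $\{v,x\}$, $\{u,v\}$ are all non-edges and hence must fail to alternate in $w$.

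Next I would prove two separation claims: (i) every occurrence of a $T_{y,xy}$-letter lies after position $p$, and (ii) every occurrence of a $T_{x,xy}$-letter lies before position $q$. For (i): since each $v\in V(T_{y,xy})\setminus\{y\}$ is a twice-letter not adjacent to $x$, the single $x$ at $p$ cannot lie strictly between its two occurrences, so both occurrences of $v$ lie on the same side of $p$. Walking along the path in the connected subtree $T_{y,xy}$ from $y$ (which sits at $q>p$) to any such $v$, if some $v$ had both occurrences before $p$ there would be a consecutive pair of adjacent vertices on opposite sides of $p$; their four occurrences would then form a non-interleaving block, contradicting that adjacent vertices alternate. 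Hence all $T_{y,xy}$-occurrences lie after $p$, and (ii) follows symmetrically.

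Finally, and this is where the real work lies, I would upgrade (i) and (ii) to the full split. The obstacle is that (i) and (ii) together only confine both sides to the window $(p,q)$ and do not by themselves forbid a $T_{y,xy}$-letter from preceding a $T_{x,xy}$-letter. To rule this out I would pass to the interval model: encode each twice-letter by the interval $[\text{first occ},\text{last occ}]$ and each once-letter by a point, so that adjacency becomes ``crossing'' of intervals (or a point lying strictly inside an interval). Since $T_{x,xy}$ is connected and adjacent letters have overlapping intervals, the union $U_x$ of all $T_{x,xy}$-intervals is a single contiguous block of positions containing $p$; define $U_y$ analogously containing $q$. By (i) and (ii), $U_y$ starts after $p$ and $U_x$ ends before $q$, so it suffices to show $U_x$ and $U_y$ are disjoint. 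I expect this to be the crux: assuming they overlap, I would choose, among all $T_{x,xy}$-intervals whose right endpoint falls inside $U_y$, one with the smallest left endpoint, and then use the contiguity of $U_x$ and $U_y$ to locate a $T_{y,xy}$-interval that either crosses it outright or forces a strictly smaller admissible left endpoint, contradicting minimality. A crossing would be an edge between $T_{x,xy}$ and $T_{y,xy}$ other than $xy$, which is impossible; hence $U_x$ and $U_y$ are disjoint with all of $U_x$ to the left. Since every $T_{x,xy}$-occurrence lies in $U_x$ and every $T_{y,xy}$-occurrence lies in $U_y$, all $x$-side letters precede all $y$-side letters, which is exactly the claimed form $w=w_{x_1}\cdots w_{x_m}\,w_{y_1}\cdots w_{y_n}$. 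As an alternative to the extremal argument, one can instead induct on $|V(T)|$ by deleting a leaf of $T$ other than $x,y$: the restricted word is again a minimal representant with the same once-letters, the inductive clean split applies, and one checks that reinserting the two occurrences of the deleted leaf cannot break the split. I would keep the interval argument as the primary line, since it localizes the entire difficulty to the production of a single forbidden crossing.
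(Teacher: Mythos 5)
Your route is genuinely different from the paper's, and most of it is correct: the position dictionary, the two one-sided separation claims (every occurrence of a $T_{y,xy}$-letter lies to the right of $p$, and symmetrically), and the interval model in which adjacency is interval crossing and each side's union $U_x$, $U_y$ is contiguous are all right, with complete proofs. But the step you yourself flag as the crux --- disjointness of $U_x$ and $U_y$ --- is not established by the extremal argument you describe, and as stated that argument fails. If the overlap of $U_x$ and $U_y$ arises only from $T_{y,xy}$-intervals nested strictly inside $T_{x,xy}$-intervals, then no $T_{x,xy}$-interval has its right endpoint inside $U_y$, so your candidate set is empty and the minimization never starts; and the assertion that contiguity ``forces a strictly smaller admissible left endpoint'' is never justified --- two intervals from opposite sides that meet can simply be nested, which is compatible with everything assumed up to that point, so no contradiction materializes from contiguity alone. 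Your inductive alternative hides the same difficulty inside the phrase ``one checks.''

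The missing idea is a nesting-propagation argument, the same device you already used for the separation claims, and essentially what the paper's Claim \ref{cl:v-placement} does in subword language. Suppose $U_x \cap U_y \neq \emptyset$. Then some $T_{x,xy}$-interval $I_u$ meets some $T_{y,xy}$-interval $I_v$; since a crossing would be an edge other than $xy$, one is nested in the other, say $I_v \subseteq I_u$ (note $u \neq x$, as a point cannot contain an interval). Walk the tree path $v = v_0, v_1, \ldots, v_r = y$ inside $T_{y,xy}$: if $I_{v_i} \subseteq I_u$, then $I_{v_{i+1}}$ crosses $I_{v_i}$, hence meets $I_u$, hence is nested relative to $I_u$; it cannot contain $I_u$, for then it would contain $I_{v_i}$, contradicting that consecutive path intervals cross. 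So every $I_{v_i} \subseteq I_u$, and in particular $q$ lies strictly inside $I_u$, i.e.\ $y$ alternates with $u \neq x$ --- a forbidden edge. The case $I_u \subseteq I_v$ is symmetric, propagating along the path from $u$ to $x$ to put $p$ strictly inside $I_v$ with $v \neq y$. Hence no interval of one side meets any interval of the other, so $U_x \cap U_y = \emptyset$, and contiguity together with $p < q$ gives the claimed split. With this replacement your proof is complete, and it is arguably cleaner than the paper's, which instead fixes the rightmost $T_{x,xy}$-letter $u'$, enumerates the four possibilities for $w \vert_{xvu'}$, and runs a separate path induction in each case.
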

\begin{proof}
Suppose, for the sake of contradiction, that not all elements of $T_{x,xy}$ occur before those of $T_{y,xy}$ in $w$. Then there exists some $v \in T_{y,xy}$ occurring before some $u \in T_{x,xy}$ in $w$. By our assumption that $x$ occurs before $y$ in $w$, it is not the case that both $u=x$ and $v = y$. Since $xy$ is the only edge of $T$ connecting a vertex in $T_{x,xy}$ to a vertex in $T_{y,xy}$, it follows that $u$ and $v$ do not alternate in $w$.

We first show that $u \neq x$. To this end, suppose, for the sake of contradiction, that $u = x$. Then $v \neq y$, meaning $v$ occurs twice in $w$. Using that $v$ and $x$ do not alternate in $w$, we have that $w \vert_{xvy} = vvxy$. Now, since $v \in T_{y,xy}$ and since $v$ does not alternate with $y$, there exists a path $v v_1 v_2 \cdots v_k y$ (of length at least 2) from $v$ to $y$ in $T$. Using the following claim, we will obtain a contradiction.

\begin{claim} \label{cl:v-placement}
If $u=x$, then for all $v_i \in \{ v_1, \ldots, v_k \}$, both instances of $v_i$ are to the left of $x$ in $w$.
\end{claim}

\begin{proof}[Proof of Claim \ref{cl:v-placement}]
We prove the claim by induction on $k$. Suppose that $k=1$, and observe that $v_1$ must alternate with $v$ but not with $x$. To alternate with $v$, there must be exactly one occurrence of $v_1$ between the two $v$'s in $w$. Then for $v_1$ not to alternate with $x$, the other occurrence of $v_1$ must also occur to the left of $x$ in $w$. In other words, either $w \vert_{v v_1 x y} = v_1 v v_1 v xy$ or $w \vert_{v v_1 xy} = v v_1 v v_1 xy$.

Assume now that both instances of each of $v_1, \ldots, v_{i-1}$ occur to the left of $x$ in $w$. Note that $v_i$ alternates with $v_{i-1}$ but not with $x$. By the same reasoning as in the base case, we see that both occurrences of $v_i$ are to the left of $x$ in $w$. The claim follows by induction.
\end{proof}


\noindent \textit{Proof of Lemma \ref{lem:subtree-order} (continued).} By the above claim, we have that both occurrences of $v_k$ are to the left of $x$ in $w$. Consequently, both occurrences of $v_k$ are to the left of $y$ in $w$, meaning $v_k$ and $y$ do not alternate. This contradicts our assumption that $v v_1 v_2 \cdots v_k y$ is a path in $T$. It follows by contradiction that $u \neq x$.

Given that $u \neq x$, we now consider each of the possibilities for the location(s) of $v$'s, and we show that each possibility leads to a contradiction. Let $u'$ be the rightmost element of $T_{x,xy}$ in $w$. By our assumption that $v$ occurs to the left of some $u \in T_{x,xy}$ in $w$, we have that $v$ occurs to the left of $u'$. Moreover, we have by the above that each instance of $v$ occurs to the right of $x$ in $w$. In particular, $u' \neq x$, so there are two instances of $u'$ in $w$. Consequently, there are four possibilities for $w \vert_{xvu'}$: 
\begin{equation*} 
w \vert_{xvu'} \in \{ xu'vvu' , \; u'xvvu', \; xvvu'u',  \; xvu'u'v \}.
\end{equation*}

Suppose that $w \vert_{xvu'} \in \{ xu'vvu' , \; u'xvvu' \}$. If $v=y$, then we would have $w \vert_{xvu'} \in \{ xu'yu', \; u'xyu' \}$, contradicting the fact that $u'$ does not alternate with $y$ in $w$. Hence, $v \neq y$. As before, let $v v_1 v_2 \cdots v_k y$ denote the path in $T$ from $v$ to $y$. By an inductive argument similar to the proof of Claim \ref{cl:v-placement}, we have that both instances of each of $v_1, v_2, \ldots, v_k$ occur between the two instances of $u'$ in $w$. Since $y$ alternates with $v_k$, $y$ must occur between the two instances of $v_k$, and hence between the two instances of $u'$. It follows that $y$ alternates with $u'$, a contradiction. 

Now, suppose that $w \vert_{xvu'} = xvvu'u'$. Since $u'$ does not alternate with $x$ in either of these cases, there exists a path $u' u_1' u_2' \cdots u_j' x \in T$ of length at least two. Observe that each $u_i' \in \{ u_1', u_2', \ldots, u_{j-1}' \}$ alternates with $u_{i-1}'$ but not with $x$ nor with $v$, and that $u_{j}'$ alternates with both $u_{j-1}'$ and with $x$. Using this observation, it is straightforward to verify that there exists $u_i' \in \{ u_1', u_2', \ldots, u_{j}' \}$ such that $w \vert_{xvu_i'} \in \{ xu_i' vv u_i', u_i' xvvu_i' \}$. Applying the previous case, we see that this is a contradiction.

Finally, suppose that $w \vert_{xvu'} = xvu'u'v$. As before, let $u' u_1' u_2' \cdots u_j' x$ denote the path from $u'$ to $x$ in $T$. Observe that $u_i'$ does not alternate with $v$ for all $u_i' \in \{ u_1', u_2', \ldots, u_j' \}$. Using this observation, we have that for all $u_i' \in \{ u_1', u_2', \ldots, u_j' \}$, both instances of $u_i'$ occur between the two instances of $v$ in $w$. This contradicts the fact that $u_j'$ alternates with $x$, completing the proof. 
\end{proof}

\begin{example}
\label{ex:tree-min-rep2}
Consider again the minimal length word-representant $w = 212434$ of the tree depicted in Figure \ref{fig:tree-four-vertices}. Using the notation of Lemma \ref{lem:subtree-order}, we have $x = 1$ and $y = 3$. As Lemma \ref{lem:subtree-order} predicts, all occurrences of the vertices of $T_{x,xy}$ (i.e., 1 and 2) occur before all occurrences of the vertices of $T_{y,xy}$ (i.e., 3 and 4) in $w$.
\end{example}

Lemma \ref{lem:subtree-order} gives us a lot of information regarding the structure of the minimal length word-representants of $T$. In fact, with the help of the next lemma, it will allow us to compute $n(T)$. This next lemma establishes $n(S_k)$, where $S_k$ is the \textit{star graph} with $k$ leaves (i.e., the complete bipartite graph $K_{1,k}$).

\begin{lemma}
\label{lem:star}
Let $S_k$ be the star graph with leaves $u_1, u_2, \ldots, u_k$ and with center vertex $c$. Then $n(S_k) = 2 \cdot k!$. Furthermore, any minimal length word-representant $w$ of $S_k$ takes one of the following forms:
$$w = u_k \; u_1 u_2 \cdots u_{k-1} \; c \; u_{k-1} u_{k-2} \cdots u_1 \hspace{.25cm} \text{ or } \hspace{.25cm} w = u_1 u_2 \cdots u_{k-1} \; c \; u_{k-1} u_{k-2} \cdots u_1 \; u_k.$$
\end{lemma}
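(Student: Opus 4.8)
The plan is to determine the full shape of a minimal representant by combining the length bound of Theorem \ref{thm:min-length-tree} with the splitting Lemma \ref{lem:subtree-order} and the defining adjacencies of the star, and then to read off both the count and the two canonical forms. First I would record the coarse structure. Since $S_k$ has $k+1$ vertices, Theorem \ref{thm:min-length-tree} gives $\ell(S_k)=2(k+1)-2=2k$, so in any minimal representant $w$ exactly two letters occur once and the remaining $k-1$ occur twice. As observed after Theorem \ref{thm:min-length-tree}, the two once-occurring letters must alternate and hence be adjacent; since every edge of $S_k$ is incident to the center, one of them is $c$ and the other is a leaf, which I will call $u_*$. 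Thus $c$ and $u_*$ occur once, while the other $k-1$ leaves occur twice.

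Next I would pin down the ends and the split around $c$. Applying Lemma \ref{lem:subtree-order} to the once-occurring pair $\{c,u_*\}$ and the edge $cu_*$, one side of the split, $T_{u_*,cu_*}$, is the single vertex $u_*$, while the other side is the star on $c$ and the remaining leaves. The lemma therefore forces $u_*$ to be extremal: the last letter of $w$ if $c$ precedes $u_*$, and the first letter of $w$ if $u_*$ precedes $c$. These two cases will correspond to the two displayed forms. After removing $u_*$, I am left with a word on $c$ and the $k-1$ doubly-occurring leaves in which $c$ appears once; since $c$ must alternate with every leaf and occurs only once, each such leaf has exactly one occurrence on each side of $c$. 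Writing this word as $A\,c\,B$, each doubly-occurring leaf appears once in $A$ and once in $B$.

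The crux is then a short pattern computation turning non-adjacency of leaves into a condition on $A$ and $B$. For two doubly-occurring leaves $u,v$, the induced subword $w\vert_{uv}$ is the concatenation of their relative order in $A$ with their relative order in $B$, and this avoids the alternations $uvuv$ and $vuvu$ precisely when the two orders are opposite. Demanding this for all pairs shows that $B$, restricted to the doubly-occurring leaves, must be the reversal of $A$. I would also check that $u_*$, sitting at an extreme end, automatically fails to alternate with every doubly-occurring leaf (its single occurrence lies outside both occurrences of each leaf), so no further constraint arises, and that the two single letters $c,u_*$ alternate as required by the edge $cu_*$.

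Finally I would assemble the count and the converse. A minimal representant is thus specified by three independent choices: which leaf is $u_*$ ($k$ options), the linear order of the remaining $k-1$ leaves along $A$ ($(k-1)!$ options, with $B$ forced to be its reversal), and whether $u_*$ is placed at the front or the back ($2$ options), giving $k\cdot(k-1)!\cdot 2 = 2\cdot k!$ words; relabeling $u_*$ as $u_k$ and the other leaves by their order in $A$ then puts each such word into exactly one of the two displayed forms. I would close by verifying the converse, namely that every word so constructed genuinely represents $S_k$, which amounts to the routine check that $c$ alternates with each leaf while no two leaves alternate. I expect the reversal step of the previous paragraph to be the main obstacle, since that is where the non-edges of the star must be shown to pin down $A$ and $B$ exactly, with no admissible configuration missed and none double-counted.
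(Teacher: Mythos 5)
Your proposal is correct and follows essentially the same route as the paper: deduce from Theorem \ref{thm:min-length-tree} that $c$ and one leaf occur once while the remaining $k-1$ leaves occur twice, force one occurrence of each doubly-occurring leaf on each side of $c$, use pairwise non-alternation of leaves to show the order after $c$ must reverse the order before $c$, and count $k\cdot(k-1)!\cdot 2 = 2\cdot k!$. The only cosmetic difference is that you invoke Lemma \ref{lem:subtree-order} to pin the once-occurring leaf to an end of the word, whereas the paper derives this directly from the requirement that it not alternate with any doubly-occurring leaf; both are one-line arguments and equally valid, since Lemma \ref{lem:subtree-order} precedes this lemma and does not depend on it.
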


\begin{proof}
Let $w$ be a minimal length word-representant for $S_k$. By Theorem \ref{thm:min-length-tree}, $w$ has length $2(k+1)-2 = 2k$. Since no two leaves of $S_k$ are adjacent, at most one leaf of $S_k$ appears only once in $w$. For $w$ to have length $2k$, it must be the case that this leaf (call it $u_k$) and the central vertex (call it $c$) occur exactly once in $w$, while the remaining $k-1$ leaves each occur twice. For these remaining leaves to alternate with $c$ in $w$, they must each occur in $w$ once before and once after $c$. Let $u_1 u_2 \cdots u_{k-1}$ be the order in which these leaves occur in $w$ before $c$. In order for no two $u_i$'s to alternate in $w$, these leaves must occur in the order $u_{k-1} u_{k-2} \cdots u_1$ after $c$. Finally, for $u_k$ not to alternate with any of the $u_i$'s, we see that either $w = u_k \; u_1 u_2 \cdots u_{k-1} \; c \; u_{k-1} u_{k-2} \cdots u_1$ or $w = u_1 u_2 \cdots u_{k-1} \; c \; u_{k-1} u_{k-2} \cdots u_1 \; u_k$.

There are $k$ choices for the leaf $u_k$ occurring only once in $w$; there are two choices for the position of $u_k$ (i.e., the beginning and the end of $w$). Moreover, there are $(k-1)!$ ways to assign values to the variables $u_1, u_2, \ldots , u_{k-1}$. Consequently, $S_k$ has $2 \cdot k!$ minimal length word-representants.   
\end{proof}

\begin{example}
Figure \ref{fig:star-3} shows the star graph $S_3$. Using the notation of Lemma \ref{lem:star}, $c=1$ for this graph. According to Lemma \ref{lem:star}, the depicted graph has the 12 minimal length representants given in Table \ref{tab:star-min}. 
\end{example}

\begin{table}
\begin{center}
\begin{tabular}{c|c|c}
    $u_k = 2$ & $u_k = 3$ & $u_k = 4$ \\ \hline
     234143 & 324142 & 432123 \\
     341432 & 241423 & 321234 \\
     243134 & 342124 & 423132 \\
     431342 & 421243 & 231324
\end{tabular}
\end{center}
\caption{Minimal length representants of the star graph $S_3$ (see Figure \ref{fig:star-3}).}
\label{tab:star-min}
\end{table}

With Lemmas \ref{lem:subtree-order} and \ref{lem:star}, we are now ready to compute $n(T)$. 
\begin{theorem}
\label{n(T)}
Let $T = (V,E)$ be a tree on at least two vertices. Then 
\[n(T) = 2\prod_{v\in V} \deg(v)!\sum_{xy\in E} \dfrac{1}{\deg(x) \deg(y)},\]
where $\deg(v)$ denotes the degree of $v$ in $T$.
\end{theorem}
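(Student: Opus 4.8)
The plan is to reduce the count to a ``rooted'' version on the two subtrees obtained by cutting the edge joining the two singly-occurring letters, and then to evaluate that rooted count by a leaf-peeling induction.

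First I would set up the reduction. By Theorem~\ref{thm:min-length-tree} a minimal representant $w$ of $T$ has length $2n-2$, so exactly two vertices occur once and all others occur twice; the two singly-occurring vertices automatically alternate, hence form an edge $xy\in E$. Assume $x$ precedes $y$ in $w$. By Lemma~\ref{lem:subtree-order}, $w = w^{(x)}w^{(y)}$ with $w^{(x)} = w\vert_{V(T_{x,xy})}$ and $w^{(y)} = w\vert_{V(T_{y,xy})}$. Since $T_{x,xy}$ is an induced subtree of $T$, the factor $w^{(x)}$ represents $T_{x,xy}$, has $x$ occurring once and every other vertex of $T_{x,xy}$ occurring twice, and so has length $2|V(T_{x,xy})|-1$; likewise for $w^{(y)}$. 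Conversely, given any representant of $T_{x,xy}$ of length $2|V(T_{x,xy})|-1$ in which $x$ occurs once and all else twice, and any such representant of $T_{y,xy}$ with $y$ playing this role, their concatenation is a minimal representant of $T$: every occurrence of a vertex of $T_{x,xy}$ precedes every occurrence of a vertex of $T_{y,xy}$, so the only cross-pair that alternates is $(x,y)$, which it should. Writing $g(S,r)$ for the number of representants of a tree $S$ of length $2|V(S)|-1$ in which a fixed vertex $r$ occurs once and all other vertices occur twice, summing over the edge $xy$ and its two orientations yields
\[
n(T) = 2\sum_{xy\in E} g(T_{x,xy},x)\,g(T_{y,xy},y).
\]

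The heart of the argument is the evaluation $g(S,r) = \prod_{v\in V(S)}\deg_S(v)!$, valid for every tree $S$ and every $r\in V(S)$, which I would prove by induction on $|V(S)|$. The base case is a single edge, where $g=1$. For the inductive step, choose a leaf $\ell\neq r$ with neighbor $p$, and set $S' = S\setminus\ell$. Deleting the two copies of $\ell$ from any word counted by $g(S,r)$ gives a word counted by $g(S',r)$ (restriction to a subalphabet preserves alternation), and $\deg_{S'}(v)=\deg_S(v)$ for $v\neq p$ while $\deg_{S'}(p)=\deg_S(p)-1$. Thus it suffices to show that every word $w'$ counted by $g(S',r)$ arises from exactly $\deg_S(p)$ words counted by $g(S,r)$, for then $g(S,r)=\deg_S(p)\cdot g(S',r)$ and the product formula follows. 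This last claim is the main obstacle. Reinserting $\ell$ means choosing positions for its two copies, and validity requires $\ell$ to alternate with $p$ and with nothing else; equivalently, the contiguous block $B$ of $w'$ lying strictly between the inserted copies must contain exactly one occurrence of $p$ and, for every letter other than $p$, either both or neither of its occurrences (in particular not the single copy of $r$). So the task becomes counting the contiguous blocks of $w'$ that isolate a single copy of $p$ together with a complete union of neighboring subtrees. I expect this to follow from a nested (laminar) structure of near-minimal representants near $p$: the occurrences of $p$ act as centers around which the subtrees hanging off $p$ in $S'$ are arranged in complete blocks, and a careful count of the admissible blocks gives $\deg_{S'}(p)+1=\deg_S(p)$ in total. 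Establishing this laminar structure precisely---presumably by a secondary induction mirroring the proof of Lemma~\ref{lem:subtree-order}, and tracking how the admissible blocks distribute among the occurrences of $p$---is where the real work lies.

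Finally I would assemble the formula. Substituting $\deg_{T_{x,xy}}(x)=\deg_T(x)-1$, $\deg_{T_{y,xy}}(y)=\deg_T(y)-1$, and $\deg(v)$ unchanged for every other $v$, and using that $V(T_{x,xy})$ and $V(T_{y,xy})$ partition $V$, the evaluation gives
\[
g(T_{x,xy},x)\,g(T_{y,xy},y) = \frac{\prod_{v\in V}\deg(v)!}{\deg(x)\,\deg(y)}.
\]
Summing over $xy\in E$ and inserting the factor $2$ produces the claimed expression for $n(T)$. As a sanity check, for the star $S_k$ one has $\prod_v\deg(v)!=k!$ and $\sum_{xy\in E}1/(\deg(x)\deg(y)) = k\cdot\frac{1}{k\cdot 1}=1$, recovering $n(S_k)=2\,k!$ from Lemma~\ref{lem:star}.
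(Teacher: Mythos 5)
Your reduction and final assembly are correct and coincide with the paper's: splitting a minimal representant at the edge $xy$ via Lemma \ref{lem:subtree-order}, the identity $n(T)=2\sum_{xy\in E} g(T_{x,xy},x)\,g(T_{y,xy},y)$, and the closing algebra are all fine, and your evaluation $g(S,r)=\prod_{v\in V(S)}\deg_S(v)!$ is exactly the paper's count $n_x$ (since $\deg_{T_{x,xy}}(x)=\deg_T(x)-1$ while all other degrees are unchanged). The genuine gap is the one you flag yourself: the claim that every word counted by $g(S',r)$ extends in exactly $\deg_S(p)$ ways to a word counted by $g(S,r)$ is asserted, not proved, and it is not a routine verification --- it is the entire content of the theorem, everything else being bookkeeping. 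To count the admissible reinsertion blocks you need a structure theorem for an \emph{arbitrary} rooted near-minimal representant $w'$: around each occurrence of $p$, the complete subtrees hanging off $p$ away from $r$ must appear as nested contiguous blocks, with the single copy of $r$ and the occurrence of $p$'s neighbor toward $r$ lying outside all of them; only with that nesting in hand does the number of blocks containing exactly one $p$ and zero-or-two of everything else come out to $\deg_S(p)$, \emph{independently of which word} $w'$ one starts from. Without such a lemma the fiber sizes of your deletion map could a priori vary with $w'$, and the identity $g(S,r)=\deg_S(p)\,g(S',r)$ has no justification.

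That missing lemma is precisely where the paper does its work, and its device for making the structure provable is to choose the insertion order so that the nesting is an induction \emph{invariant} rather than a fact about arbitrary words: it grows the tree outward from the root (breadth-first), starting from the star $N'(x)$, whose rooted representants are pinned down exactly by Lemma \ref{lem:star}, and then inserting the children of an already-placed vertex $z$; Claim \ref{cl:neighbors} simultaneously counts the insertions ($\deg(z)!$ of them, via the same ``$m+1$ placements on one side, $k-m-1$ on the other'' block count you anticipate) and propagates an explicit normal form for the resulting words, which is what licenses the counting at the next step. Your leaf-peeling order instead requires the normal form near a vertex $p$ buried inside an arbitrary representant of $S'$, before anything has been established about such representants; to prove it you would end up running essentially the paper's induction (or reorganizing your argument into their root-outward order). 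So: right skeleton, true key claim, but the key claim is conjectured rather than established, and it is where all the work lives.
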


\begin{proof}
As previously discussed, in any minimal length word-representant for $T$, there are two elements of $V$ that occur only once (call them $x$ and $y$), while all other elements of $V$ occur exactly twice. Note that in order for $x$ and $y$ to occur only once in a word-representant, they must be adjacent in $T$. In other words, the choice of such a pair of elements $(x,y)$ corresponds exactly to the choice of an edge $xy \in E$. 


In what follows, we will establish the number $n_x$ of word-representants of $T_{x,xy}$ of length $\ell (T_{x,xy}) + 1$, and the number $n_y$ of word-representants of $T_{y,xy}$ of length $\ell (T_{y,xy}) +1$. These word-representants can be arranged in two ways (determined by which of $x$ and $y$ occurs first) to obtain word-representants for $T$ of length $\ell (T) = 2n-2$. By Theorem \ref{thm:min-length-tree}, these representants are minimal. Moreover, by Lemma \ref{lem:subtree-order}, every minimal length representant of $T$ is of this certain form. Summing over $E$ to account for the possible choices of $(x,y)$, we see that 
\begin{equation} \label{eq:outline}
n(T) = 2 \cdot \sum_{xy \in E} (n_x \cdot n_y).
\end{equation}

To derive an expression for $n_x$, we will use an inductive method similar to breadth first search; this same method can be applied to obtain an expression for $n_y$. 

Let $N(x)$ denote the neighborhood of $x$ in $T$, and let $N'(x):= N(x) \setminus \{ y \}$ denote the neighborhood of $x$ in $T_{x,xy}$. A straightforward consequence of Lemma \ref{lem:star} is that $N'(x)$ has $(\deg (x)-1)!$ word-representants in which $x$ occurs exactly once and each element of $N'(x) \setminus \{ x \}$ occurs exactly twice. Using the notation of Lemma \ref{lem:star}, these are the word-representants of the form 
$$w = u_{\deg(x)-1} \; u_1 u_2 \cdots u_{\deg(x)-2} \; x \; u_{\deg(x)-2} u_{\deg(x)-3} \cdots u_1 \; u_{\deg(x)-1}.$$ 

Choose one of the aforementioned $(\deg (x)-1)!$ word-representants of $N'(x)$; call it $w_{N'(x)}$. We proceed by establishing the following claim.

\begin{claim} \label{cl:neighbors}
Let $z \in N'(x) \setminus \{x \}$. There are $\deg(z)!$ ways to insert the elements $\{ v_1, v_2, \ldots,\allowbreak v_{\deg(z)-1} \} = N(z) \setminus \{ x,z \}$ into $w_{N'(x)}$ to form length-$(\ell (N'(x) \cup N(z)) + 1)$ representants for the subtree $N'(x) \cup N(z)$ of $T_{x,xy}$. Moreover, these representants take the form
\begin{align*} 
\star \; \star \; \star \; v_{i_1} v_{i_2} \cdots v_{i_m}\; &z\; v_{i_m} v_{i_{m-1}} \cdots v_{i_1}\; \star \; \star \; \star \; x \; \star \; \star \;\; \star \\
& \star \; \star \; \star \; v_{i_{m+1}} v_{i_{m+2}} \cdots v_{i_{\deg(z)-1}} \; z \; v_{i_{\deg(z)-1}} v_{i_{\deg(z)-2}} \cdots v_{i_{m+1}} \; \star \; \star \;\; \star,     
\end{align*}
where the $\star$'s indicate an arbitrary number of elements of $N'(x) \setminus \{x\}$, and where 
$$\{ v_{i_1}, v_{i_2}, \ldots, v_{i_{\deg (z) -1}} \} = \{ v_1, v_2, \ldots, v_{\deg(z)-1} \}.$$
\end{claim}

\begin{proof}[Proof of Claim \ref{cl:neighbors}]
Let $w_G$ denote a word-representant of the graph $G$.  We proceed by induction on the number of neighbors of $z$. If $\deg (z) = 1$, we have $N(z) \setminus \{ x,z \} = \varnothing$, so there is nothing to show. Suppose that $\deg(z) = 2$. Let $n_1$ be the single element of $N(z) \setminus \{ x,z \}$. Of the vertices represented in $N'(x)$, $v_1$ is adjacent only to $z$. Moreover, any $s \in N'(x) \setminus \{ x,z \}$ alternates with $x$ but not with $z$ or $v_1$. Therefore, to obtain a word-representant of $N'(x) \cup N(z)$ of length $\ell (N'(x) \cup N(z))+1$, we must insert $v_1$ twice into $w_{N'(x)}$ so that we get a word $w_{N'(x) \cup N(z)}$ satisfying
$$w_{N'(x) \cup N(z)} \vert_{xzsv_1} \in \{ sv_1zv_1xzs, \; v_1zv_1sxsz \} \; \forall s \in N'(x) \setminus \{ x,z \}.$$
In other words, the two instances of $v_1$ must be placed so as to immediately surround one of the instances of $z$. Therefore, there are $2 = 2! = \deg(z)!$ ways to insert the element of $N(z) \setminus \{ x,z \}$ into $w_{N'(x)}$ to form word-representants for $N'(x) \cup N(z)$ of the desired length, and these possibilities both have the desired form. 

Having established our base case, let us assume that the result holds for up to $k-1$ neighbors of $z$, and suppose $\deg (z) = k$. By this inductive assumption, there are $(k-1)!$ ways to insert the vertices $v_1, \ldots, v_{k-2}$ into $w_{N'(x)}$ to form word-representants $w_{N'(x) \cup \{ v_1,\ldots, v_{k-2} \}}$ of the desired length; moreover, these representants take the form described above. Therefore, it suffices to show that there are $k$ ways to insert $v_{k-1}$ into a given representant $w_{N'(x) \cup \{ v_1,\ldots, v_{k-2} \}}$ to get a representant $w_{N'(x) \cup N(z)}$ of length $\ell (N'(x) \cup N(z))+1$, and that the resulting representants are of the desired form.  

Suppose we are given a representant $w_{N'(x) \cup \{ v_1,\ldots, v_{k-2} \}}$ of length $\ell (N'(x) \cup \{ v_1,\ldots, \allowbreak v_{k-2} \})+1$ and the form 
\begin{align*}
    \star \; \star \; \star \; v_{i_1} \cdots v_{i_m} \; z \; v_{i_m} \cdots v_{i_1} \; \star \; \star \; \star \; x \; \star \; \star \; \star \; v_{i_{m+1}} \cdots v_{i_{k-2}} \; z \; v_{i_{k-2}} \cdots v_{i_{m+1}} \; \star \; \star \;\; \star, 
\end{align*}
where again the $\star$'s are elements of $N'(x) \setminus \{x \}$. Recall that any element of $N'(x) \setminus \{ x \}$ alternates only with $x$ and that we want to insert $v_{k-1}$ so that it alternates with $z$ but not with anything else. With this in mind, it is straightforward to see that the two $v_{k-1}$'s must be placed either immediately surrounding one of the $z$'s or immediately surrounding a pair of $v_{i_j}$'s. Hence, there are $m+1$ possible placements among $\star \; \star \; \star \; v_{i_1} \cdots v_{i_m} \; z \; v_{i_m} \cdots v_{i_1} \; \star \; \star \; \star$ and $k-m-1$ possible placements among $\star \; \star \; \star \; v_{i_{m+1}} \cdots v_{i_{k-2}} \; z \; v_{i_{k-2}} \cdots v_{i_{m+1}} \; \star \; \star \; \star$; together, there are $k$ possible placements of the $v_{k-1}$'s, giving the desired result.  
\end{proof}

\noindent \textit{Proof of Theorem \ref{n(T)} continued.}
We now proceed by continuing to describe our breadth-search method for computing $n_x$. To recap, we have started by considering $x$ as well as all of its neighbors in $T_{x,xy}$. As mentioned above, there are $(\deg(x)-1)!$ word-representants of $N'(x)$ containing one instance of $x$ and two instances of everything else. 

Next, we consider the neighbors of each vertex of $N'(x) \setminus \{ x \}$. (Under the breadth-first search analogy, we would like to add these neighbors to a queue.) Claim \ref{cl:neighbors} shows that for any such $z \in N'(x) \setminus \{ x \}$, there are $\deg(z)!$ ways to add two instances of each element of $N(z) \setminus \{x,z \}$ to a given word-representant of $N'(x)$ to get a word-representant of $N'(x) \cup N(z)$. Replacing $x$ with $z$, and considering some $t \in N(z) \setminus \{ x,z\}$, Claim \ref{cl:neighbors} gives that there are $\deg (t)!$ ways to add two instances of each element of $N(t) \setminus \{ z,t \}$ to a given word-representant of $N'(x) \cup N(z)$ to get a word-representant of $N'(x) \cup N(z) \cup N(t)$. Continuing in this manner, we see that there are $(\deg (x) - 1)! \prod_{v \in V(T_{x,xy})} \deg (v)!$ word-representants of $T_{x,xy}$ of length $\ell (T_{x,xy})+1$. Hence we have shown that
$$n_x = (\deg (x) - 1)! \prod_{u \in V(T_{x,xy})} \deg (u)! \hspace{1cm} \text{and} \hspace{1cm} n_y = (\deg (y) - 1)! \prod_{v \in V(T_{y,xy})} \deg (v)!.$$
Looking back at Equation (\ref{eq:outline}), it follows that
$$n(T) = 2 \cdot \sum_{xy \in E} \frac{\prod_{v \in V} \deg (v)!}{\deg (x) \deg (y)} = 2 \prod_{v \in V} \deg (v)! \sum_{xy \in E} \frac{1}{\deg (x) \deg (y)},$$
as desired.
\end{proof}

\begin{example}
Consider the tree $T = (V,E)$ on four vertices depicted in Figure \ref{fig:tree-four-vertices}. According to Theorem \ref{n(T)}, 
\begin{align*}
    n(T) = 2 & \prod_{v \in V} \deg (v)!  \sum_{xy \in E} \frac{1}{\deg (x) \deg (y)} \\
    &= 2 \left ( 2! \cdot 1! \cdot 2! \cdot 1! \right ) \left ( \frac{1}{1 \cdot 2} + \frac{1}{2 \cdot 2} + \frac{1}{2 \cdot 1} \right ) = 8 \left ( \frac{5}{4} \right ) = 10.
\end{align*}
Indeed, it is straightforward to verify that the ten representants shown in Table \ref{tab:tree-min-reps} are precisely the minimal word-representants of $T$, where $x$ and $y$ are the vertices appearing once.

\begin{table}
    \begin{center}
    \begin{tabular}{c|c}
        $\{ x,y \}$ & Corresponding Minimal Length Representants of $T$ \\ \hline
        $\{ 1,2 \}$ & 231434, 314342, 243413, 434132 \\ 
        $\{ 1,3 \}$ & 212434, 434212 \\ 
        $\{ 3,4 \}$ & 212314, 132124, 421231, 413212
    \end{tabular}
    \caption{Minimal length representants of the tree shown in Figure \ref{fig:tree-four-vertices}.}
    \label{tab:tree-min-reps} 
    \end{center}
\end{table}
\end{example}
Theorem \ref{n(T)} gives the following corollary.

\begin{corollary}
Let $P_k$ be the path on $k$ vertices. Then for $k\ge 3$, we have 
$$n(P_k) = (k+1)\cdot 2^{k-3}.$$
\end{corollary}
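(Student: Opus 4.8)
The plan is to apply Theorem \ref{n(T)} directly to $T = P_k$, so the entire argument reduces to reading off the degree sequence of the path and evaluating the two factors in the formula. First I would label the vertices $v_1, v_2, \ldots, v_k$ consecutively along the path, so that the edge set is $\{ v_i v_{i+1} : 1 \le i \le k-1 \}$. The two endpoints $v_1$ and $v_k$ have degree $1$, while the $k-2$ interior vertices $v_2, \ldots, v_{k-1}$ each have degree $2$. From this the product factor is immediate: $\prod_{v \in V} \deg(v)! = (1!)^2 (2!)^{k-2} = 2^{k-2}$.

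Next I would evaluate the sum $\sum_{xy \in E} \frac{1}{\deg(x)\deg(y)}$ by sorting the edges into two types. The two \emph{boundary} edges $v_1 v_2$ and $v_{k-1} v_k$ each join a degree-$1$ vertex to a degree-$2$ vertex, contributing $\frac{1}{1 \cdot 2} = \frac{1}{2}$ apiece. The remaining $k-3$ \emph{interior} edges $v_i v_{i+1}$ (for $2 \le i \le k-2$) each join two degree-$2$ vertices, contributing $\frac{1}{2 \cdot 2} = \frac{1}{4}$ apiece. Hence the sum equals $2 \cdot \frac{1}{2} + (k-3) \cdot \frac{1}{4} = \frac{k+1}{4}$. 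Combining the two factors gives $n(P_k) = 2 \cdot 2^{k-2} \cdot \frac{k+1}{4} = (k+1) 2^{k-3}$, as claimed.

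The only point requiring care is the boundary bookkeeping at small $k$: when $k = 3$ there are no interior edges, and indeed the count $k-3$ correctly vanishes while the two boundary edges are exactly $v_1 v_2$ and $v_2 v_3$. As sanity checks I would verify $n(P_3) = 4 = 2 \cdot 2!$ (which also follows from Lemma \ref{lem:star}, since $P_3 = S_2$) and $n(P_4) = 5 \cdot 2 = 10$, matching the worked computation for the tree in Figure \ref{fig:tree-four-vertices}, which is the path $2\text{--}1\text{--}3\text{--}4$.

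I do not anticipate any genuine obstacle here, as the statement is a direct specialization of Theorem \ref{n(T)}. The main thing to get right is the edge classification and confirming that the clean closed form $\frac{k+1}{4}$ for the sum holds uniformly for every $k \ge 3$, rather than treating the endpoint edges and interior edges as separate cases that might fail to merge at the boundary.
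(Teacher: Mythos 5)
Your proposal is correct and is exactly the intended argument: the paper states this as an immediate corollary of Theorem \ref{n(T)}, obtained by substituting the degree sequence of $P_k$ (two vertices of degree $1$, $k-2$ of degree $2$) into the formula, just as you do. Your evaluation of both factors, the boundary check at $k=3$, and the sanity checks against $S_2$ and the four-vertex tree example are all accurate.
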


\subsection{Cycles}

In this subsection, we consider the cycle graph $C_n$ on $n \geq 3$ vertices. The minimal representants of $C_3$ are quite easy to understand.

\begin{proposition}
We have $\ell (C_3) = 3$ and $n(C_3) = 6$.
\end{proposition}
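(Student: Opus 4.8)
The plan is to exploit the fact that $C_3$ is the complete graph $K_3$, so that word-representing it amounts to requiring all three vertices to pairwise alternate. First I would establish the lower bound $\ell(C_3) \ge 3$: by Definition~\ref{def:word-rep}, every word-representant must contain each of the three vertices at least once, so any representant has length at least $3$.

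For the matching upper bound, I would observe that any single permutation of the three vertices---say $123$---represents $C_3$. Indeed, when each of two letters $x$ and $y$ occurs exactly once in a word, there cannot be two instances of $x$ (or of $y$) without an occurrence of the other in between, so $x$ and $y$ automatically alternate. Hence in a permutation of $\{1,2,3\}$ every pair of vertices alternates, producing exactly the edge set of $K_3 = C_3$. This shows $\ell(C_3) \le 3$, and combined with the lower bound gives $\ell(C_3) = 3$.

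To compute $n(C_3)$, I would count the words of length $3$ that represent $C_3$. Any word of length $\ell(C_3) = 3$ over the three-letter alphabet $\{1,2,3\}$ that contains each letter at least once must, by the pigeonhole principle, contain each letter exactly once; that is, it must be a permutation of $\{1,2,3\}$. By the previous paragraph, every such permutation represents $C_3$, and conversely every representant of length $3$ is such a permutation. Since there are $3! = 6$ permutations of $\{1,2,3\}$, we conclude $n(C_3) = 6$.

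The argument is entirely elementary, and I do not expect a genuine obstacle; the only point requiring a moment of care is the verification that a length-$3$ representant is \emph{forced} to be a permutation and that all six permutations represent $C_3$ rather than a proper subgraph. Both follow from the single observation that two letters each occurring exactly once always alternate, so no edge can fail to appear. This elementary base case will also serve as a useful contrast to the genuinely more intricate analysis needed for $C_n$ with $n \ge 4$, where vertices must begin to repeat.
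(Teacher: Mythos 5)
Your proof is correct and follows essentially the same route as the paper: the lower bound from each vertex appearing at least once, and the observation that the length-$3$ representants are exactly the $3!=6$ permutations of the vertices (the paper states this as "straightforward to check," which your alternation-of-singletons argument makes explicit).
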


\begin{proof}
Since a word-representant by definition contains each vertex at least once, $\ell (C_3) \geq 3$. It is then straightforward to see that the six permutations 123, 132, 213, 231, 312, 321 are the only length-three word-representants of $C_3$, verifying that $\ell (C_3) = 3$ and that $n(C_3) = 6$.
\end{proof}

For $n \geq 4$, the minimal representants of $C_n$ are more complicated. 

\begin{theorem} \label{thm:cycle-rep-length}
Let $C_n = (V,E)$ be the cycle graph on $n \geq 4$ vertices. Then $\ell (C_n)=2n-2$.
\end{theorem}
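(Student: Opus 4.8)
The plan is to prove the two inequalities $\ell(C_n)\le 2n-2$ and $\ell(C_n)\ge 2n-2$ separately. For the lower bound there is essentially nothing to do: for $n\ge 4$ the cycle $C_n$ contains no triangle, so Lemma~\ref{lem:triangle-free} applied to the $n$-vertex graph $C_n$ gives $\ell(C_n)\ge 2n-2$ at once. All of the work therefore lies in exhibiting a word-representant of $C_n$ of length exactly $2n-2$.

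For the upper bound I would label the vertices $1,2,\ldots,n$ so that the edges are $\{i,i+1\}$ for $1\le i\le n-1$ together with $\{n,1\}$. Guided by the structure forced in the proof of Lemma~\ref{lem:triangle-free} (exactly two letters occur once, and they must be adjacent), I would let the two singleton letters be the adjacent pair $1,2$ and let $3,4,\ldots,n$ each occur twice. Since the subgraph of $C_n$ induced on $\{3,\ldots,n\}$ is the path $P_{n-2}\colon 3-4-\cdots-n$, I first write down a $2$-uniform representant of this path, namely
$$u = 3\,4\,3\,5\,4\,6\,5\,\cdots\,n\,(n-1)\,n,$$
that is $u = a_1a_2a_1a_3a_2\cdots a_m a_{m-1}a_m$ with $a_i=i+2$ and $m=n-2$. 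A direct check shows that $a_i$ and $a_{i+1}$ alternate while $a_i$ and $a_j$ with $|i-j|\ge 2$ do not, because the second copy of $a_i$ precedes both copies of $a_j$. (I would also record the degenerate case $n=4$, where $P_{n-2}=P_2$ and $u=3\,4\,3\,4$.)

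I then splice in the two singletons to encode the remaining edges $23$, $n1$, and $12$ without creating spurious alternations. Concretely, I insert $2$ immediately after the first letter of $u$ and $1$ immediately before the last letter of $u$, producing
$$w = 3\,2\,4\,3\,5\,4\,\cdots\,n\,(n-1)\,1\,n,$$
of length $2(n-2)+2=2n-2$. The verification splits into three parts: (i) the restriction $w\vert_{34\cdots n}=u$ is unchanged, so all edges and non-edges among the doubles are already correct; (ii) $2$ sits strictly between the two copies of $3$ but before both copies of every other double, so $2$ alternates only with $3$, and symmetrically $1$ sits strictly between the two copies of $n$ but after both copies of every other double, so $1$ alternates only with $n$; and (iii) since $1$ and $2$ each occur once they automatically alternate, yielding the edge $12$.

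The hard part is step (ii): the singletons must be inserted at exactly the right spot. Placing $2$ between the first copy of $3$ and the adjacent copy of $4$ (rather than between that $4$ and the second $3$) is precisely what prevents $2$ from alternating with $4$, and the mirror-image placement governs $1$ relative to $n-1$. Choosing these insertion points correctly is exactly what guarantees that no non-edge of $C_n$ is accidentally represented as an edge; once it is set up, the remaining checks are mechanical, and combining the two bounds gives $\ell(C_n)=2n-2$.
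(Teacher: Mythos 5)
Your proposal is correct and takes essentially the same approach as the paper: the lower bound is the identical application of Lemma~\ref{lem:triangle-free}, and your word $w = 3\,2\,4\,3\,5\,4\cdots n\,(n-1)\,1\,n$ is precisely the reversal of the paper's representant $w' = n\,1\,(n-1)\,n\,(n-2)(n-1)\cdots 3\,4\,2\,3$, so the two constructions coincide up to the alternation-preserving operation of reversal. Your organization of the verification (a $2$-uniform path representant on $3,\ldots,n$ plus careful insertion of the two singletons) is just a more modular phrasing of the same check the paper performs directly.
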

\begin{proof}
Note that for $n \geq 4$, $C_n$ is triangle-free; therefore, Lemma~\ref{lem:triangle-free} gives that $\ell(C_n)\ge 2n-2$. We now show that $\ell (C_n) \leq 2n-2$ by constructing a length $2n-2$ word-representant for $C_n$. Suppose that the vertices of $C_n$ are labeled $1,2,\ldots, n$ such that there are edges between vertices labeled with consecutive integers, as well as between $n$ and 1. Consider the word $w' = n1 \; (n-1)n \; (n-2)(n-1) \; (n-3)(n-2) \; \cdots \; 45 \; 34 \; 23$. Here, we format $w'$ by adding space between every pair of letters to illustrate the structure of the word; the first letters in each pair form the decreasing sequence $n,n-1,n-2,n-3, \ldots, 4,3,2$, while the second letters in each pair form the sequence $1,n,n-1,n-2, \ldots, 5,4,3$. In this construction, the letters 1 and 2 appear exactly once, while the letters $3,4,\ldots, n$ appear exactly twice. Moreover, for every vertex $v \in V \setminus \{ 1,2 \}$, the letters appearing between the two occurrences of $v$ in $w'$ are precisely $v+1$ and $v-1$. Consequently, $v$ alternates only with $v+1$ and $v-1$ in $w'$. Additionally, 1 alternates only with 2 and $n$, and 2 alternates only with 1 and 3.
\end{proof}

\begin{example}
Consider the cycle graph $C_5$ shown in Figure \ref{fig:five-cycle}. According to Theorem \ref{thm:cycle-rep-length}, $\ell (C_5) = 2\cdot 5-2 = 8$. It is straightforward to check that 51453423 is therefore a minimal length word-representant of $C_5$.  
\end{example}

Having established that $\ell (C_3) = 3$ and that $\ell (C_n) = 2n-2$ for $n \geq 4$, we would now like to establish $n(C_n)$. We have already shown that $n(C_3) = 6$. In fact, $n(C_n) = 2n$ for all $n \geq 3$. 

\begin{theorem}
\label{thm:cycle-rep-number}
Let $C_n = (V,E)$ be the cycle graph on $n$ vertices. For $n\ge 3$, $n(C_n)=2n$.
\end{theorem}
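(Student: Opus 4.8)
The plan is to show that the minimal representants of $C_n$ are in bijection with the $2n$ \emph{oriented} edges of $C_n$. First I would dispose of the case $n=3$ (already handled, $n(C_3)=6=2\cdot 3$) and assume $n \ge 4$, so that $\ell(C_n)=2n-2$ by Theorem~\ref{thm:cycle-rep-length}. Exactly as in the tree case, a length-$(2n-2)$ representant $w$ must have precisely two vertices occurring once and all others occurring exactly twice: by the argument in Lemma~\ref{lem:triangle-free} at most two vertices occur once, and if fewer than two did then the total length would be at least $2n-1$, a contradiction. The two singly-occurring vertices automatically alternate in $w$, hence are adjacent, so they form an edge of $C_n$. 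Thus every minimal representant determines an oriented edge $(p,q)$, namely the edge formed by its two single vertices, oriented so that $p$ precedes $q$ in $w$.

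Next I would argue that the map $w \mapsto (p,q)$ is a bijection onto the set of oriented edges of $C_n$. For surjectivity I would use symmetry: the automorphism group $\mathrm{Aut}(C_n)=D_n$ has order $2n$ and acts (by relabeling letters) on the set of minimal representants, and this action commutes with the map $w\mapsto(p,q)$ since relabeling preserves the left-to-right order of positions. Because $D_n$ acts simply transitively on the $2n$ oriented edges (an automorphism fixing two adjacent vertices in order must be the identity), applying the $2n$ elements of $D_n$ to the single explicit representant $w' = n1\,(n-1)n\,\cdots\,34\,23$ built in the proof of Theorem~\ref{thm:cycle-rep-length} yields a representant for every oriented edge. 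It then remains to prove injectivity, i.e.\ that each oriented edge $(p,q)$ admits \emph{at most} one minimal representant; together with surjectivity this gives $n(C_n)=2n$.

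For the injectivity (uniqueness) step I would fix an oriented edge $(p,q)$ and let $p=a_0 - a_1 - \cdots - a_{n-2} - a_{n-1}=q$ be the Hamiltonian path obtained by deleting the edge $pq$ from $C_n$; here $a_1,\dots,a_{n-2}$ are the doubly-occurring letters. Writing $I_i$ for the interval spanned by the two occurrences of $a_i$, adjacency in $C_n$ translates into the conditions that $I_i$ and $I_{i+1}$ cross (alternate) while $I_i,I_j$ do not cross for $|i-j|\ge 2$, that $p$ lies strictly inside $I_1$ and outside every other $I_j$, and that $q$ lies strictly inside $I_{n-2}$ and outside every other $I_j$. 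I would then show, by induction along the path starting from $p$, that these crossing and non-crossing constraints force the relative order of all $2n-2$ positions: once the placements of $p,a_1,\dots,a_i$ are fixed, the requirement that $a_{i+1}$ cross $a_i$ but neither cross $a_{i-1}$ nor enclose $p$ pins down the two occurrences of $a_{i+1}$ uniquely, and finally the constraints on $q$ together with the stipulation that $p$ precedes $q$ determine its position. This step-by-step elimination of alternative placements is the main obstacle, but it is entirely parallel to the inductive forcing already carried out in Claim~\ref{cl:v-placement} and in the proof of Lemma~\ref{lem:subtree-order}, so I expect the same techniques to apply.
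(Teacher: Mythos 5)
Your overall framework is sound and closely parallels the paper's: both proofs reduce the count to ``one representant per choice of adjacent pair plus order,'' i.e.\ one per oriented edge. Your surjectivity argument via the simply transitive action of $\mathrm{Aut}(C_n)\cong D_n$ on oriented edges, applied to the explicit word from Theorem~\ref{thm:cycle-rep-length}, is correct, and is in fact tidier than the paper, which never explicitly verifies that every oriented edge is realized. The well-definedness of the map $w\mapsto(p,q)$ (exactly two letters occur once, and they must be adjacent) is also fine.

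The gap is in the injectivity step, which is the heart of the theorem. Your inductive claim --- that requiring $a_{i+1}$ to cross $I_i$, not cross $I_{i-1}$ (or any earlier interval), and not enclose $p$ or $q$ pins down the placement of $a_{i+1}$ uniquely --- is false. After the genuinely forced first step $w\vert_{p a_1 q}=a_1\,p\,a_1\,q$, the letter $a_2$ (for $n\ge 5$, so that $a_2\not\sim q$) admits \emph{two} placements consistent with every constraint among the letters placed so far: the nested one, $w\vert_{pqa_1a_2}=a_1\,p\,a_2\,a_1\,a_2\,q$, and the drifting one, $w\vert_{pqa_1a_2}=a_2\,a_1\,a_2\,p\,a_1\,q$, in which both copies of $a_2$ sit to the left of everything else; the same bifurcation recurs at each subsequent intermediate step. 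No purely local induction can eliminate the drifting branch. What kills it is a global look-ahead: once some $a_{i+1}$ has both occurrences to the left of $p$, every later $a_j$ is forced to have both occurrences to the left of $p$ as well (one copy must lie inside $I_{j-1}$, and the other cannot straddle $p$ since $a_j\not\sim p$), and then the terminal requirement that $q$ lie inside $I_{n-2}$ is unsatisfiable, since $q$ lies to the right of $p$. This is precisely the argument the paper supplies (``otherwise both 4's will be to the right of the 2 \dots\ this would make it impossible for $n$ to alternate with 1''), read from the other end of the path. Claim~\ref{cl:v-placement} gives you the propagation half of this, but the terminal contradiction must be stated and used; without it your argument only yields $n(C_n)\ge 2n$, not equality.
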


\begin{proof}
Let $w$ be a minimal length word-representant for $C_n$. Since $\ell(C_n) = 2n-2$, two elements of $V$ will occur exactly once in $w$, while the remaining vertices will occur exactly twice. 

There are $n$ choices to choose the pair of vertices which appear only once, since they alternate and thus there must be an edge between them. After this pair of vertices is chosen, there are two orders in which they can occur in a minimal length representant. Without loss of generality, suppose the pair we have chosen is $(1,2)$ and that 1 occurs before 2.

We will now consider the ways in which two instances of each of $3,4,\ldots, n$ can be placed in this minimal word-representant $w$ (which is assumed to be such that $w \vert_{12} = 12$. Certainly, the two 3's must surround the 2 but not the 1. Moreover, the two 4's must alternate with the two 3's without alternating with the 1 or the 2. We claim that the 4's must be between 1 and 2. Indeed, otherwise both 4's will be to the right of the 2. If this is the case, then every successive number must have at least one instance to the right of the 2, and hence both instances to the right of the 2 (so as to avoid alternating with 2). This would make it impossible for $n$ to alternate with 1. Therefore, our representant must satisfy $w \vert_{1234} = 1 4 3 4 2 3$. Similarly, the two 5's must surround the leftmost 4 without alternating with 3, giving $w \vert_{12345} = 1 545 3423 $. We can continue in this manner and conclude with the two $n$'s immediately surrounding 1 and the leftmost instance of $n-1$. 

Observe that after choosing the pair $(1,2)$ and their relative locations, this process has involved no choices. In other words, once $1$ and $2$ were chosen and placed in that order, there was only one way to form a word-representant of $C_n$. It follows that $n(C_n) = 2n$, as desired.
\end{proof}

\begin{remark}
We invite the reader to compare Theorem \ref{thm:cycle-rep-number} with \cite[Theorem 1]{4n}, which says that for $n > 3$, the cycle graph $C_n$ has exactly $4n$ $2$-uniform word-representants. 
\end{remark}

\section{Graphs representable from pattern avoidance in words}
\label{sec-patterns}

So far in this paper we have exclusively considered word-representable graphs. In this section, we define a more general notion of representability for graphs, motivated by Kitaev in \cite{K4}. To this end, we first establish two preliminary definitions.

\begin{definition}
Two words are said to be \textit{isomorphic} if there is a bijective, pattern-preserving correspondence between their letters.
\end{definition}

\begin{example}
The words 112134, 332378, and $aabacd$ are all isomorphic. 
\end{example}

\begin{definition}
Let $w$ be a word defined on an alphabet $V$, and let $u$ be a word defined on an alphabet $U$. Then $w$ \textit{contains} $u$ if there exists a subset $\{ x_1, x_2, \ldots, x_{\vert U \vert } \} \subseteq V$ such that $w \vert_{x_1, x_2, \ldots, x_{\vert U \vert}}$ has a contiguous subword isomorphic to $u$. If $w$ does not contain $u$, we say $w$ \textit{avoids} $u$.

\end{definition}

\begin{example}
Let $w = 121223$ and let $u = 112$. Then $w$ contains $u$, since $w \vert_{23} = 2223$ has a subword that is isomorphic to $u$ (namely, 223).
\end{example}

With these definitions in mind, we can now introduce a generalized notion of graph representability.

\begin{definition} \label{def:t-rep}
Let $t$ be a word on two letters. A graph $G = (V,E)$ is $t$-representable if there exists a word $w$ over the alphabet $V$ such that for any $x,y \in V$, $xy \in E$ if and only if $w \vert_{xy}$ avoids the pattern given by $t$. We require that $w$ contains each letter of $V$ at least once, and we say that $w$ $t$-represents $G$, or that $w$ is a $t$-representant for $G$.  
\end{definition}

\begin{figure}
\centering
\begin{minipage}{.5\textwidth}
  \centering
  \begin{tikzpicture}[scale=0.1]
\tikzstyle{every node}+=[inner sep=0pt]
\draw [black] (40.2,-17.5) circle (3);
\draw (40.2,-17.5) node {$1$};
\draw [black] (51.5,-26) circle (3);
\draw (51.5,-26) node {$2$};
\draw [black] (47.3,-37.8) circle (3);
\draw (47.3,-37.8) node {$3$};
\draw [black] (34.3,-37.8) circle (3);
\draw (34.3,-37.8) node {$4$};
\draw [black] (29.5,-26) circle (3);
\draw (29.5,-26) node {$5$};
\draw [black] (42.6,-19.3) -- (49.1,-24.2);
\draw [black] (50.49,-28.83) -- (48.31,-34.97);
\draw [black] (44.3,-37.8) -- (37.3,-37.8);
\draw [black] (33.17,-35.02) -- (30.63,-28.78);
\draw [black] (31.85,-24.13) -- (37.85,-19.37);
\end{tikzpicture}
    \caption{The cycle graph $C_5$.}
    \label{fig:five-cycle}
\end{minipage}%
\begin{minipage}{.55\textwidth}
     \begin{center}
\begin{tikzpicture}[scale=0.128]
\tikzstyle{every node}+=[inner sep=0pt]
\draw [black] (45.5,-19.4) circle (3);
\draw (45.5,-19.4) node {$2$};
\draw [black] (29.6,-19.4) circle (3);
\draw (29.6,-19.4) node {$1$};
\draw [black] (29.6,-33.5) circle (3);
\draw (29.6,-33.5) node {$3$};
\draw [black] (45.5,-33.5) circle (3);
\draw (45.5,-33.5) node {$4$};
\draw [black] (42.5,-19.4) -- (32.6,-19.4);
\draw [black] (43.26,-21.39) -- (31.84,-31.51);
\draw [black] (45.5,-22.4) -- (45.5,-30.5);
\end{tikzpicture}
\end{center}
    \caption{A 112-representable graph.}
    \label{fig:112-rep}
\end{minipage}
\end{figure}

\begin{example}
The graph shown in Figure \ref{fig:112-rep} is 112-representable with $w = 121334$ serving as a 112-representant.
\end{example}

\begin{remark}
The word-representable graphs are precisely the 11-representable graphs. This can be seen by noting that ``alternating" is equivalent to avoiding the word 11. Additionally, a word $1^k$-represents a graph if and only if it $a^k$-represents it. 
\end{remark}

\begin{remark}
In \cite{K4}, Kitaev defines a similar notion of graph representability associated with a pattern $u$ on two letters, which we will call $u$\textit{-Kitaev-representability}. Kitaev's notion aligns with Definition \ref{def:t-rep}, except that his notion depends on an ordering of the vertices which must match the ordering of letters comprising $u$. For instance, $112$-Kitaev-representants are distinct from $221$-Kitaev-representants.  In particular, Kitaev's version is defined for labeled graphs, while Definition \ref{def:t-rep} works for unlabeled graphs.
\end{remark}

\begin{example}
Let $w=2123$. Then $w$ 121-represents the graph with vertex set $\{1, 2, 3\}$ with edges between $1$ and $3$ and between $2$ and $3$. However, $w$ 121-Kitaev-represents the complete graph on the vertices $\{1, 2,3\}$.
\end{example}

We now prove several results on the representability of graphs for various $t$. In \cite{K4}, Kitaev shows that for all $u$ of length at least three, every graph is $u$-Kitaev-representable. Kitaev's approach is to first note that the complete graph $K_n$ is $u$-Kitaev-representable, and then to show that if $w$ $u$-Kitaev-represents any graph $G$, one can construct a word $w'$ from $w$ that $u$-Kitaev-represents $G'$, where $G'$ is the graph $G$ after deleting some edge. We use this same approach to show that all graphs are $t$-representable for certain $t$.

Note that if $t$ is of the form $1^k$ for some $k>1$, then $t$-representants are the same as $u$-Kitaev-representants. Thus we only consider the cases in which $t$ has two distinct letters. 

\begin{theorem}
\label{thm:t-rep-1}
If $t$ is of the form $a^kb^la$ for positive integers $k$ and $l$, then every graph is $t$-representable.
\end{theorem}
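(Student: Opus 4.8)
The plan is to follow the approach the authors have already outlined for Kitaev's theorem, adapted to the unlabeled setting: first exhibit a single word that $t$-represents the complete graph $K_n$, then show that one can always delete a single edge by appending a short suffix to the representing word. Iterating the edge-deletion step from $K_n$ down to any target graph $G$ on $n$ vertices then proves the theorem. Throughout, I write $t = a^k b^l a$ and use the observation that whether $xy$ is an edge depends only on $w\vert_{xy}$ avoiding the pattern $t$.

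First I would handle the base case. The word $1\,2\,3\cdots n$ (each letter once) should $t$-represent $K_n$: for any two letters $x,y$, the restriction $w\vert_{xy}$ has length two, which is strictly shorter than $t = a^k b^l a$ (of length $k+l+1 \geq 3$), so $w\vert_{xy}$ trivially avoids $t$ and hence $xy$ is an edge. Thus every pair is adjacent, giving $K_n$.

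Next comes the inductive/edge-deletion step, which I expect to be the main obstacle. Suppose $w$ is a word that $t$-represents a graph $G$, and let $ij \in E(G)$ be an edge we wish to remove. Mimicking the commented-out proof sketch in the excerpt, I would form a new word by appending to $w$ first the final permutation $\sigma(w)$ (which does not change which pairs contain the pattern $t$, since after $w$ every letter has already been fully recorded) and then a short gadget on the two letters $i$ and $j$ designed to create exactly one copy of the forbidden pattern $t$ in $w\vert_{ij}$. The key design question is what gadget to append: I want the new suffix, together with the tail of $w\,\sigma(w)$, to produce an occurrence of $a^k b^l a$ in the pair $\{i,j\}$ while producing no occurrence of $t$ in any other pair $\{c,d\}$. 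Concretely, since $\sigma(w)$ ends with one occurrence of each letter, I can arrange for the suffix to supply additional $i$'s and $j$'s so that the combined subword $w'\vert_{ij}$ ends in a block realizing $a^k b^l a$ (for instance by appending enough copies of $i$, then $l$ copies of $j$, then one more $i$, relative to the trailing $i,j$ already present in $\sigma(w)$). This destroys the edge $ij$ while, I claim, leaving all other adjacencies intact.

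Finally I would verify that no other edges are affected, which is the heart of the argument and the step most likely to require care. For a pair $\{c,d\} \neq \{i,j\}$, at least one of $c,d$ appears in the appended suffix at most once (in fact only within $\sigma(w)$), so any candidate occurrence of $t = a^k b^l a$ in $w'\vert_{cd}$ would have to lie essentially inside $w\vert_{cd}$; since the pattern both begins and ends with the letter $a$ and appending $\sigma(w)$ merely records each surviving letter once at the end, one checks that no new occurrence of $t$ can be manufactured across the $w$/$\sigma(w)$/gadget boundaries. Hence $w'\vert_{cd}$ avoids $t$ if and only if $w\vert_{cd}$ did, so the adjacency of every other pair is preserved. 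This shows $w'$ $t$-represents $G \setminus \{ij\}$. Starting from the word for $K_n$ and deleting edges one at a time until we reach $G$ completes the induction, proving that every graph is $t$-representable. The delicate point throughout is confirming that the specific shape of $t$ — namely that it ends in the same letter $a$ with which it begins, after an intervening block $b^l$ — is exactly what prevents the final permutation and the edge-deletion gadget from spuriously creating or destroying other patterns; I would isolate this in a short lemma about occurrences of $a^k b^l a$ in words of the form $w\,\sigma(w)\,(\text{gadget})$.
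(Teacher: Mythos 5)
There is a genuine gap at the heart of your edge-deletion step: the claim that appending $\sigma(w)$ ``does not change which pairs contain the pattern $t$'' is false for patterns of the form $t = a^k b^l a$, and this is exactly the point where the shape of $t$ matters. Suppose $w\vert_{cd}$ ends with $c^k d^l$ and contains no occurrence of $t$ (for instance $w\vert_{cd} = c^k d^l$ itself, which is shorter than $t$); then $cd$ is an edge of $G$. Since the last letter of $w\vert_{cd}$ is $d$, the final permutation satisfies $\sigma(w)\vert_{cd} = cd$, so $(w\,\sigma(w))\vert_{cd}$ ends with $c^k d^l\, c\, d$, which contains the contiguous subword $c^k d^l c$ --- an occurrence of $t$. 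The edge $cd$ is therefore destroyed regardless of what gadget on $\{i,j\}$ you append afterwards. Concretely, take $t = aba$ and $w = 123$, which $t$-represents $K_3$; your word $w' = 123\cdot 123\cdot g$ (with $g$ any gadget on the letters $1,2$) satisfies $w'\vert_{13} = 1313\cdots$, which contains $131$, so the edge $\{1,3\}$ is erroneously removed, and likewise $\{2,3\}$. The construction $w\,\sigma(w)\,t[i,j]$ does work for patterns $a^k b^l$ with $k,l \geq 2$ (that is the paper's Theorem \ref{thm:t-rep-2}, which is the case the commented-out sketch you were mimicking actually belongs to), but the single trailing $a$ in $a^k b^l a$ means that the one extra copy of $c$ contributed by $\sigma(w)$ can complete a forbidden pattern that $w$ alone did not contain.

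The paper's proof repairs precisely this defect with two modifications your argument is missing. Instead of $\sigma(w)$ it appends $r(\sigma(w))_1^{l+1} r(\sigma(w))_2^{l+1} \cdots r(\sigma(w))_n^{l+1}$, i.e., each letter repeated $l+1$ times consecutively, taken in the \emph{reverse} order of the final permutation, and only then $t[i,j]$. The repetition guarantees that any maximal run of a letter meeting the appended section has length at least $l+1 > l$, so it can never serve as the $b^l$ block of a new occurrence of the pattern; the reversal guarantees that the letter whose run ends $w\vert_{cd}$ is the first one repeated, so that run merges and lengthens instead of being capped by a fresh copy of the other letter (which is what creates the spurious occurrence in your construction). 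With these two ingredients, the case analysis on where the $d^l$ block of a putative new occurrence can sit --- inside $w$, inside the repeated block, or inside $t[i,j]$ --- closes off every possibility; without them it does not.
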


\begin{proof}
Note that $K_n$ is $t$-representable by the word $123\cdots n$. Let $G$ be any graph, and suppose that $w$ $t$-represents $G$. Let $ij$ be an edge in $G$ and define $G'$ as the graph obtained from $G$ by deleting the edge $ij$.

Let $t[i, j]$ be the word $t$ after the substitution $a\rightarrow i, b\rightarrow j$ has been made.  Let $r(\sigma(w))_k$ denote the $k^\text{th}$ letter in the reverse of the final permutation of $w$ and let $\sigma(w)_k^l$ denote $l$ consecutive instances of the letter $\sigma(w)_k$.   Take
$$w'=wr(\sigma(w))_1^{l+1}r(\sigma(w))_2^{l+1}\cdots r(\sigma(w))_n^{l+1}t[i,j].$$ 
We claim that $w'$ $t$-represents $G'$. 

Indeed, because $w'$ contains $w$, only the edges in $G$ will be included, and because $w'$ includes $t[i, j]$, there will be no edge between $i$ and $j$. Thus it suffices to show that no other edges are removed.

Assume there exists an instance of the pattern $c^kd^lc$ in $w'$ that is not in $w$, where $\{c, d\}\neq \{i, j\}$. Then consider where the $d^l$ can be placed. It cannot be a part of in the section $r(\sigma(w))_1^{l+1}r(\sigma(w))_2^{l+1}\cdots r(\sigma(w))_n^{l+1}$, or else there would be $l+1$ consecutive appearances of $d$. Additionally, it cannot come in $t[i, j]$, or else $\{c, d\} = \{i, j\}$. Therefore, it must be completely contained in $w$. It follows that there can be no instance of $c$ in $w$ that comes after those $l$ $d$'s, or else the pattern would have already existed in $w$. Therefore, $c$ must appear before $d$ in $w'$ after the prefix $w$. But by our construction, the opposite happens. Thus there is no such instance of $c^kd^lc\cdots$ in $w'$, so no new edges other than $ij$ are removed.  Thus $w'$ $t$-represents $G$, as desired.  

In this way, we can construct a word $t$-representing $G$ for any graph $G$ by removing edges one at a time from $K_n$ to reach $G$.
\end{proof}

\begin{theorem}
\label{thm:t-rep-2}
If $t$ is of the form $a^kb^l$ where $k, l\ge 2$, then every graph is $t$-representable.
\end{theorem}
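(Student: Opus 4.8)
The plan is to follow the inductive edge-deletion strategy used in the proof of Theorem \ref{thm:t-rep-1}. First I would note that $K_n$ is $t$-representable by the word $12\cdots n$: here each letter occurs once, so for every pair $\{c,d\}$ the restriction $w\vert_{cd}$ has length $2 < k+l$ and hence cannot contain $t$, making every pair an edge. It then suffices to show that if a word $w$ $t$-represents a graph $G$ and $ij$ is an edge of $G$, one can build a word $w'$ that $t$-represents $G' := G \setminus \{ij\}$; deleting edges one at a time from $K_n$ then yields every graph.

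The construction I propose differs from that of Theorem \ref{thm:t-rep-1}, and this difference is the heart of the matter. Because $t = a^k b^l$ has no trailing letter and (as $k,l\ge 2$) requires runs of length at least two of \emph{both} letters, the right buffer to append is one that is alternating on each pair and that \emph{shortens} rather than lengthens terminal runs. I claim
$$ w' = w\,\sigma(w)\,t[i,j] = w\,\sigma(w)\,i^k j^l $$
works. The key property of $\sigma(w)$ is that, restricted to any pair $\{c,d\}$, it reads $YZ$ where $Z$ is the letter of $\{c,d\}$ whose last occurrence in $w$ is later; since $w\vert_{cd}$ ends in $Z$, the restriction $w'\vert_{cd}$ reads $\cdots Z\,Y\,Z\cdots$ across the seam, so the terminal run of $w\vert_{cd}$ is not prolonged and both letters appear only in runs of length one in the appended permutation. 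By contrast, the reversed block buffer $r(\sigma(w))$ of Theorem \ref{thm:t-rep-1} would append $Z$ first and lengthen that run, which for $a^kb^l$ could manufacture the forbidden pattern; this is precisely why I switch to the forward final permutation.

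The verification then splits into three checks. For the edge $ij$ itself, $w'\vert_{ij}$ ends in $\cdots i^{k}j^{l}$ (or $\cdots i^{k+1}j^{l}$, depending on the order of $i,j$ in $\sigma(w)$), which contains $t$, so $ij$ becomes a non-edge. For a pair $\{c,d\}$ disjoint from $\{i,j\}$, nothing from $t[i,j]$ enters $w'\vert_{cd}$, and the appended $\sigma(w)$ contributes only isolated occurrences of $c$ and $d$; since $k,l\ge 2$, no new $c^kd^l$ or $d^kc^l$ can be completed, so $w'\vert_{cd}$ contains $t$ if and only if $w\vert_{cd}$ does. The only subtle case---and the step I expect to be the main obstacle---is a pair $\{i,e\}$ (or $\{j,e\}$) with $e\notin\{i,j\}$, because there the suffix does contribute a genuine run $i^k$ (resp. $j^l$). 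Here one must exploit the placement guaranteed by $\sigma(w)$: immediately before this terminal run sits a single isolated copy of $e$, so the companion run needed to complete $e^k i^l$ or $i^k e^l$ has length one, and again $k,l\ge 2$ forbids the pattern. Assembling these cases shows $w'$ $t$-represents $G'$, and the induction down from $K_n$ completes the proof.
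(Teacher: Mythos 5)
Your proposal is correct and takes essentially the same approach as the paper: the paper's own proof also deletes edges one at a time starting from $K_n$ and uses exactly your construction $w' = w\,\sigma(w)\,t[i,j]$, relying on the same two facts (each letter occurs exactly once in $\sigma(w)$, and its order is governed by last occurrences in $w$, so terminal runs are broken at the seam; and $k,l\ge 2$ rules out completing any new occurrence of the pattern). The only divergence is organizational—you case on the type of pair $\{c,d\}$ while the paper cases on where a putative new instance of $c^kd^l$ could sit—so the ``difference'' you emphasize is a difference from the proof of Theorem \ref{thm:t-rep-1}, not from the paper's proof of this theorem.
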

\begin{proof}
Note again that $K_n$ is $t$-representable by the word $123\cdots n$. Now let $G$ be any graph, and suppose $w$ $t$-represents $G$. Let $ij$ be an edge in $G$ and let $G'$ be the graph obtained from $G$ by deleting the edge $ij$. Take $w'=w\sigma(w)t[i,j]$. It suffices to show that $w'$ $t$-represents $G'$.  

Indeed, because $w'$ contains $w$, only the edges in $G$ will be included, and because $w'$ includes $t[i, j]$, there will be no edge between $i$ and $j$. Thus it suffices to show that no other edges are removed.

Assume there exists an instance of the pattern $c^kd^l$ in $w'$ that is not in $w$, where $\{c, d\}\neq \{i, j\}$. If $c^kd^l$ begins in the $w$ section, then the entirety of $c^k$ must be contained in $w$, or else a $d$ will appear before all $k$ $c$'s appear in a row. Now if the $d$'s after $c^k$ begins at the end of $w$, then $c$ comes before $d$ in $\sigma (w)$, which does not work either. Thus $c^kd^l$ has to appear after $w$ in $w'$. Then it has to begin in $\sigma(w)$, but there is only one appearance of $c$ in $\sigma(w)$. Thus both $c$ and $d$ appear in $t[i, j]$, and thus $\{c, d\}=\{i, j\}$, contradiction. Thus there is no such instance of $c^kd^l$ in $w'$, so no new edges other than $ij$ are removed. It follows that $w'$ $t$-represents $G$, as desired.  
\end{proof}

The remaining cases are of the form $t=a^kb$ and $t=ab^k$, where $k\ge 1$. First, note that the $t$-representable graphs in these cases are the same. Indeed, if $w$ $a^kb$-represents some graph $G$, then the reverse of $w$ $ab^k$-represents $G$ and vice versa.  

For the case of $k=1$, because every vertex must appear at least once in a $t$-representant, there will always be an appearance of the pattern $ab$ for any two vertices. Thus, only empty graphs are $ab$-representable. We now address the case $k\ge 3$.  

\begin{theorem}
\label{thm:t-rep-3}
Let $k\ge 3$ and set $t=a^kb$. Then every graph is $t$-representable.
\end{theorem}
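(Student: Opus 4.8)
The plan is to follow the same inductive edge-deletion strategy used in Theorems~\ref{thm:t-rep-1} and~\ref{thm:t-rep-2}: first check that $K_n$ is $t$-represented by $12\cdots n$ (any pair restricts to a two-letter word, too short to contain $a^kb$ once $k\ge 3$), and then show that from a $t$-representant $w$ of a graph $G$ one can build a $t$-representant $w'$ of $G'=G$ with a chosen edge $ij$ deleted. Deleting the non-edges of the target graph one at a time then produces a $t$-representant of any graph.

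The main obstacle is that the naive move fails badly. Appending $t[i,j]=i^kj$ does create the forbidden pattern for $\{i,j\}$, but it is fatal: for every other neighbor $c$ of $i$, the restriction $w'\vert_{ic}$ now ends in a clean block $i^k$, so the next occurrence of $c$ appended at \emph{any} later stage yields $i^kc$ and silently destroys the edge $ic$. Hence the crux is cross-pair pollution: one must break up the run of $i$'s (to protect the pairs $\{i,c\}$) without piling up too many copies of any single $c$ before the closing $j$ (which would destroy the pairs $\{j,c\}$).

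The fix I propose is a two-phase append governed by a run-length invariant. Maintain the invariant that for every edge $cd$ the restriction $w\vert_{cd}$ has all maximal runs of length $\le k-1$ and trailing run exactly $1$ (true for $12\cdots n$). To delete $ij$, first append the gadget
\[
A \;=\; i\,c_1 c_2\cdots c_m\,i^{\,k-1}\,j,\qquad \{c_1,\dots,c_m\}=V\setminus\{i,j\},
\]
so that $A\vert_{ij}=i^kj$ (killing the edge $ij$), while $A\vert_{ic_s}=i\,c_s\,i^{\,k-1}$ and $A\vert_{jc_s}=c_s\,j$ keep every other pair's new runs short; here $k\ge 3$ is exactly what makes the worst junction runs $i^2c_s$ and $c_s^2j$ harmless. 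Second, append $\sigma(wA)$: because $\sigma(\,\cdot\,)\vert_{cd}$ always begins with the letter whose last occurrence is \emph{earlier}, it never extends a trailing run, so appending it to a word all of whose edge-runs are $\le k-1$ creates no new forbidden pattern (each boundary only produces $x^r y$ with $r\le k-1$) and resets every trailing run to $1$, restoring the invariant.

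Concretely, the steps in order are: (i) the base case and the reduction to single-edge deletion; (ii) verify that $A$ creates the pattern for $ij$ and, pair by pair ($\{i,c\}$, $\{j,c\}$, $\{c,d\}$), that no other forbidden pattern appears and all edge-runs remain $\le k-1$; (iii) verify that appending $\sigma(wA)$ is pattern-free and re-establishes the invariant; (iv) conclude by induction on the number of deleted edges. I expect step (ii)—controlling the interaction of the trailing letter of $w$ with the leading $i$ and the interleaved $c_s$'s of $A$ at the junction—to be the main obstacle, and precisely the point at which the hypothesis $k\ge 3$ is needed.
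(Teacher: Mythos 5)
Your proposal is correct, but it takes a genuinely different route from the paper's proof of Theorem \ref{thm:t-rep-3}. You extend the \emph{appending} scheme of Theorem \ref{thm:t-rep-2}: append the interleaved gadget $A = i\,c_1c_2\cdots c_m\,i^{k-1}j$ and then $\sigma(wA)$, carrying a run-length invariant (for every edge pair, all maximal runs have length $\le k-1$ and the trailing run has length $1$) through the entire induction. The paper instead \emph{prepends}: it sets $w' = i^{k-1}\,v\,i\,j\,\pi(w)\,w$, where $v$ is a permutation of $V(G)\setminus\{i,j\}$. There the block $i^{k-1}\cdots ij$ restricts to $i^kj$ and kills the edge; the permutation $v$ breaks the $i$-run for every pair $\{i,c\}$ with $c\ne j$ (the same role your interleaved $c_1,\dots,c_m$ play); and $\pi(w)$ serves as a buffer, since between the copy of any letter $c$ in $\pi(w)$ and the first copy of $c$ in $w$ every other letter occurs, so no run can cross from the prefix into $w$ --- and the prefix $vij\pi(w)$, being a concatenation of two permutations, contains at most two copies of any letter, which is where $k\ge 3$ enters. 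The structural difference is what the induction must carry: because the paper adds material only \emph{before} $w$, the old word remains a suffix at every stage, and the induction hypothesis can stay the bare statement that $w$ $t$-represents $G$ --- any representant works as input to the next step. Your construction adds material \emph{after} $w$, so you are forced to strengthen the hypothesis to your trailing-run invariant; indeed, fed an arbitrary $t$-representant in which some edge pair's restriction ends in a run of length $\ge k-1$ followed by nothing, your append of $A$ (e.g.\ the suffix $c_sj$ for the pair $\{j,c_s\}$) could complete a forbidden pattern and destroy an edge, so the invariant is not optional bookkeeping but an essential part of your argument --- and you correctly identified and maintained it. In exchange, your route unifies Theorems \ref{thm:t-rep-2} and \ref{thm:t-rep-3} under one appending template, at the cost of pair-by-pair verification at two junctions; the paper's prepending trick buys a shorter proof with no strengthened hypothesis at all.
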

\begin{proof}
Note again that $K_n$ is $t$-representable by the word $123\cdots n$. Now let $G$ be any graph, and suppose $w$ $t$-represents $G$. Let $ij$ be an edge in $G$ and let $G'$ be the graph obtained from $G$ by deleting the edge $ij$.

Let $v$ be a permutation of the set $V(G)\backslash \{i, j\}$. Take $w'=i^{k-1}vij\pi(w)w$. It suffices to show that $w'$ $t$-represents $G'$.  

Indeed, because $w'$ contains $w$, only the edges in $G$ will be included, and because $w'$ includes an instance of $i^kj$ at the front, there will be no edge between $i$ and $j$. Thus it suffices to show that no other edges are removed.

Assume there exists an instance of the pattern $c^kd^l$ in $w'$ that is not in $w$, where $\{c, d\}\neq \{i, j\}$. Clearly the $c^k$ section cannot intersect with the $i^{k-1}$ section, so it must begin in the section $vij\pi(w)$. Note that this is a concatenation of two permutations of the set $V(G)$. Thus there can only be two occurrences of $c$ in that section. But because $\pi(w)$ is the initial permutation of $w$, every other element will appear between the second occurrence of $c$ and its first occurrence in $w$, so we cannot have an instance of $c^kd$. Thus there is no such instance of $c^kd^l$ in $w'$, so no new edges other than $ij$ are removed. It follows that $w'$ $t$-represents $G$, as desired. 
\end{proof}

For $k=2$, we have not yet fully determined the $a^kb$-representable graphs. However, we have the following theorem.

\begin{theorem}
\label{thm:t-rep-4}
All $aa$-representable graphs (i.e. word-representable graphs) are also $aab$-representable.
\end{theorem}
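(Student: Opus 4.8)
The plan is to show that any word-representable (i.e., $aa$-representable) graph $G$ admits an $aab$-representant by modifying a known word-representant of $G$. Recall from Theorem~\ref{thm:equiv-defns} that $G$ has a $k$-uniform word-representant $w$ for some $k$. The key observation is that in a $k$-uniform word, every letter appears exactly $k$ times, so the condition of two letters $x,y$ alternating in $w$ is very rigid. First I would translate the $aa$-condition into the $aab$-condition: for letters $x,y$, the subword $w\vert_{xy}$ avoids $aa$ exactly when $x$ and $y$ perfectly alternate (each occurrence of $x$ is separated from the next by exactly one $y$ and vice versa), whereas $w\vert_{xy}$ avoids $aab$ exactly when there is no place where two consecutive same letters are immediately followed by the other letter, i.e.\ $w\vert_{xy}$ has no contiguous factor isomorphic to $xxy$. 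The goal is to build from $w$ a new word $w'$ so that the edge set captured by the $aa$-relation on $w$ matches the edge set captured by the $aab$-relation on $w'$.

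The natural construction to try is to take a uniform word-representant $w$ for $G$ and append a block that controls the $aab$-pattern. Concretely, I would first note that $K_n$ is $aab$-representable by the word $123\cdots n$ (every $w\vert_{xy}$ is a single $xy$ or $yx$, which avoids $aab$), and then, mirroring the deletion strategy used in Theorems~\ref{thm:t-rep-1}, \ref{thm:t-rep-2}, and \ref{thm:t-rep-3}, attempt to realize an arbitrary graph by starting from a word-representant and adjusting. Alternatively, and more in the spirit of the direct claim, I would take a $k$-uniform $aa$-representant $w$ of $G$ and argue that $w$ itself, or a controlled lengthening of $w$ such as $w\,\sigma(w)$ or $w^{\,m}$ for suitable $m$, is an $aab$-representant of the same graph $G$. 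The heart of the verification is the pairwise claim: for each pair $x,y$, one must check that $w'\vert_{xy}$ avoids $aab$ if and only if $xy\in E$, i.e.\ if and only if $w\vert_{xy}$ avoids $aa$. One direction should be routine: if $x$ and $y$ alternate perfectly in a uniform word, then $w\vert_{xy}$ is itself an alternating string $xyxy\cdots$ or $yxyx\cdots$, which contains no $xxy$ factor, so the edge survives.

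The main obstacle will be the converse direction together with ensuring no spurious edges are created. If $xy\notin E$, then $w\vert_{xy}$ contains a factor $xx$ (two consecutive copies of one letter); I must guarantee that in $w'$ this doubled letter is always eventually followed by the other letter, producing the forbidden $aab$ pattern, so that the non-edge is correctly recorded. This is exactly where appending material like $\sigma(w)$ or concatenating copies of $w$ helps: after a block $xx$ occurs in the restriction, a later appearance of $y$ completes an $xxy$ factor in $w'\vert_{xy}$. I expect the delicate case analysis to be: whether the two consecutive $x$'s are followed by a $y$ \emph{within the restriction} $w'\vert_{xy}$, which depends on the relative positions of the last block of repeated $x$'s and the occurrences of $y$. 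I would handle this by choosing the appended suffix so that every letter's full complement of occurrences appears again after the original block of $w$, guaranteeing that any $xx$ factor is followed by a $y$. The crux of the argument, and the step most likely to require care, is verifying that this suffix does not itself introduce a forbidden $aab$ pattern for a pair that \emph{is} an edge—that is, confirming the construction preserves all genuine edges while flagging exactly the non-edges.
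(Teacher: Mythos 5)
Your outline gestures at the paper's construction---among the candidates you float, $w\,\sigma(w)$ is exactly the word the paper uses---but as written there is a genuine gap: the step you yourself call the crux (that the appended suffix does not create a forbidden $aab$ pattern for pairs that \emph{are} edges) is never argued, and it is precisely the point where the choice of suffix matters. Your alternative candidates fail this test. Taking $w$ itself does not work: for the edgeless graph on $\{x,y\}$ represented by $w=yxx$, the restriction $yxx$ contains $aa$ but avoids $aab$, so the non-edge would be recorded as an edge. Powers $w^m$ also fail: if $xy$ is an edge and $w\vert_{xy}$ both begins and ends with $x$ (e.g.\ $w\vert_{xy}=xyx$), then $w^2\vert_{xy}=xyxxyx$ contains the contiguous factor $xxy$, destroying a genuine edge; your suggestion to append ``every letter's full complement of occurrences'' reproduces the same problem. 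Even appending an arbitrary permutation of the alphabet fails, since if that permutation restricts to $xy$ while $w\vert_{xy}$ ends in $x$, the factor $xxy$ again appears. What singles out $\sigma(w)$ is an order property your proposal never invokes: for every pair $x,y$, the relative order of $x$ and $y$ in $\sigma(w)$ agrees with the order of their \emph{last} occurrences in $w$.

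With that property in hand the verification is short, and it is the paper's proof. If $xy\in E$, then $w\vert_{xy}$ alternates, say ending in $x$; the last $y$ in $w$ then precedes the last $x$, so $\sigma(w)\vert_{xy}=yx$ and $(w\sigma(w))\vert_{xy}$ continues the alternation, hence avoids $aa$ and a fortiori $aab$. If $xy\notin E$, then $w\vert_{xy}$ contains $aa$; if it already contains $aab$ there is nothing to do, and otherwise a rightmost-doubled-pair argument shows $w\vert_{xy}$ must \emph{end} in a doubled letter, say $xx$, whence $\sigma(w)\vert_{xy}=yx$ and $(w\sigma(w))\vert_{xy}$ ends in $xxyx$, which contains $xxy$. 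So your instinct that ``a later appearance of $y$ completes the factor'' is right for non-edges, but the proof needs the last-occurrence property of $\sigma(w)$ to guarantee simultaneously that this $y$ arrives after the doubled block and that no genuine edge is damaged; without committing to that specific suffix and proving that property, the argument does not close.
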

\begin{proof}
Let $w$ $aa$-represent a graph $G$. We claim that $w \sigma(w)$ $aab$-represents $G$, where we recall that $\sigma(w)$ denotes the final permutation of $w$. Let $x$ and $y$ be any two distinct vertices of $G$. 

Suppose first that $(x,y) \not \in E(G)$. Then the subword $w\vert_{xy}$ of $w$ induced by $x$ and $y$ contains at least one occurrence of the pattern $aa$. If $w \vert_{xy}$ also contains at least one occurrence of $aab$, then so does $w \sigma(w)$, and this direction of the proof is complete. Therefore, suppose that $w \vert_{xy}$ avoids $aab$. Then $w \vert_{xy}$ must end in the pattern $aa$. If $w \vert_{xy}$ ends in $xx$, then $\sigma(w) \vert_{xy} = yx$, meaning $(w \sigma(w) )\vert_{xy}$ ends in $xxyx$, which contains the pattern $aab$. The case in which $w \vert_{xy}$ ends in $yy$ follows similarly.  

Next, suppose that $(x,y) \in E(G)$. Then $w \vert_{xy}$ avoids $aa$ (and therefore avoids $aab$). Appending $\sigma(w)$ to $w$ does not introduce any occurrences of the pattern $aa$. Therefore, $(w \sigma(w)) \vert_{xy}$ avoids $aab$. 
\end{proof}

\section{Future work} \label{sec:future work}
In Section \ref{sec:minimal length}, we proved the following upper bound for $\ell(G)$ in terms of its connected components:
\begin{equation}
\label{ell-bound}
\ell(G) \le \sum_{i=1}^k (\ell(G_i) + |V_i|) - \max_{1\le j\le k}{|V_j|}.
\end{equation}

It would be interesting to strengthen this bound. This leads to the following problem.

\begin{problem}
Find stricter bounds for $\ell(G)$ in terms of its connected components and classify all graphs for which equality in Equation \ref{ell-bound} holds.
\end{problem}

We also proved that for both trees and cycles, $\ell(G)=2|G|-2$.  We also showed that for triangle-free graphs $G$, $\ell(G)\ge 2|G|-2$. We ask if these observations can be extended to classify graphs with representation number 2 for which $\ell(G)$ is close to $2|G|$.

\begin{problem}
Classify all graphs $G$ with representation number 2 for which 
\begin{itemize}
    \item [(a)] $\ell(G) = 2|G|-2$, 
    \item [(b)] $\ell(G) = 2|G|-1$, 
    \item [(c)] $\ell(G) = 2|G|$.
\end{itemize}
\end{problem}

\noindent In \cite{K1}, Kitaev classifies all graphs with representation number 2. This gives a nice starting point from which to attack the problem above.

Also in Section \ref{sec:minimal length}, we showed that $n(P_k)  = (k+1)\cdot 2^{k-3}$ for $k \geq 3$. It is worth noting that $(k+1) \cdot 2^{k-3}$ is also a formula for sequence A079859 in the On-line Encyclopedia of Integer Sequences (OEIS) \cite{oeis}. It could be interesting to investigate whether the minimal length word-representants of $P_k$ relate in any natural way to the other objects enumerated by this same formula.
\begin{problem}
Find a natural bijection between the minimal length word-representants of $P_k$ ($k \geq 3$) and the other objects enumerated by OEIS sequence A079859.
\end{problem}

In Section \ref{sec-patterns}, we proved that all graphs are $t$-representable for several choices of words $t$.  Finding which graphs are representable for a general word may be quite complicated and difficult.  However, finding the answer for the case of $t=ab^2$ or $a^2b$, which are equivalent by reflection, is still open and may be tractable.

\begin{problem}
Characterize the graphs which are $a^2b$-representable.
\end{problem}

\section*{Acknowledgments}
This research was conducted at the University of Minnesota Duluth REU, funded by NSF Grant
1659047 and NSA Grant H98230-18-1-0010. The authors would like to thank Joe Gallian for running the REU and suggesting the topic.  The authors also thank Amanda Burcroff and anonymous reviewers for many helpful comments.

\end{document}